\newtheorem{theorem}{Theorem}
\newtheorem*{theorem*}{Theorem}
\newtheorem{corollary}{Corollary}
\newtheorem{proposition}{Proposition}
\newtheorem{propositionap}{Proposition}[section]
\theoremstyle{definition}
\newtheorem{definition}{Definition}
\newtheorem{definitionap}{Definition}[section]
\theoremstyle{remark}
\newtheorem{remark}{Remark}
\newtheorem{example}{Example}
\newtheorem{examplap}{Example}[section]
\newtheorem{remarkap}{Remark}[section]
\def\Z{{\mathbb Z}}
\def\N{{\mathbb N}}
\def\R{{\mathbb R}}
\newcommand{\V}{{\mathcal V}}
\date{}
\title{Crossing indices, traits and the principle of indistinguishability}
\author{Igor Nikonov\footnote{The author was supported by the Russian Foundation for Basic Research (grant No. 19-51-51004-NIF-a).}}
\begin{document}

\maketitle

\begin{abstract}
A (weak chord) index is a function on the crossings of knot diagrams such that: 1) the index of a crossing does not change under Reidemeister moves; 2) crossings which can be paired by a second Reidemeister move have the same index. We show that one can omit the second condition in the case of the universal index. As a consequence, we get the following principle of indistinguishability for classical knots: crossings of the same sign in a classical knot diagram can not be distinguished by any inherent property.
\end{abstract}

\section*{Introduction}

The conventional combinatorial approach to knot theory looks at knots as diagrams considered up to special transformations --- Reidemeister moves. Many constructions of knot invariants rely on combinatorial elements of diagrams --- arcs and crossings. But what if we could distinguish crossings of knot diagrams and divide them into classes and tribes? For example, discern a loop crossing which appears in a first Reidemeister move and a crossing which survived the reduction to the minimal diagram (Fig.~\ref{pic:crossing_hypotetic_types}).

\begin{figure}[h]
\centering\includegraphics[width=0.15\textwidth]{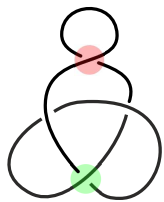}
\caption{Crossings in a knot diagram}\label{pic:crossing_hypotetic_types}
\end{figure}

For virtual knots, this idea was excellently realized by V.O. Manturov who created the parity theory~\cite{M3} and then applied it to strengthen knot invariants, to prove minimality
theorem and to construct (counter)examples~\cite{M4,M5,M6,M7}, see also~\cite{IM,IMN,IMN1,Nwp,Nif}.

Despite all achievements of the parity theory in the theory of virtual knots, it has one flow: the parity theory gives nothing to classical knots~\cite[Corollary 4.3]{IMN}. The reason is any parity on a knot is derived from the homology of the space the knot lies in, and the plane where the classical knots live, has trivial homology.

After this failure with parity we can try to weaken the parity axioms and consider another type of labelling of crossings --- index. The first appearance of an index can be found in V. Turaev's work~\cite{T} on strings (flat knots). After the paper of A. Henrich~\cite{Henrich}, the index appeared in works of several authors~\cite{Cheng,ILL,K2} and others. Z. Cheng~\cite{Cheng2} gave an axiomatic description of the index. M. Xu~\cite{Xu} found the most general formulation of index which he called \emph{weak chord index}.

The index polynomial has become a very efficient tool in the virtual knot theory. May be, it will be useful for study of classical knots. But, alas! There is no nontrivial indices on classical knots~\cite[Corollary 5]{Nct}.

This paper is a sequel of~\cite{Nct}. Here we make the final step and show that there can not be nontrivial invariant labels (except the sign, surely) on crossings of classical knot diagrams. This fact can be formulated as the following \emph{principle of indistinguishability}:
\begin{quote}
\em Crossings of the same sign in a classical knot diagram can not be distinguished by any inherent property.
\end{quote}

The mechanism behind the indistinguishability principle can be expressed as the \emph{substitution principle}:
\begin{quote}
\em Given two crossing of the same sign in a classical knot diagram, one of them can substitute for the other in the diagram.
\end{quote}
The exact formulation of the principles are given in Theorems~\ref{thm:indistinguishability_principle} and~\ref{thm:substitution_principle}.

A demonstration how the substitution principle works is shown in Fig.~\ref{pic:crossing_substitute}.

\begin{figure}[h]
\centering\includegraphics[width=0.9\textwidth]{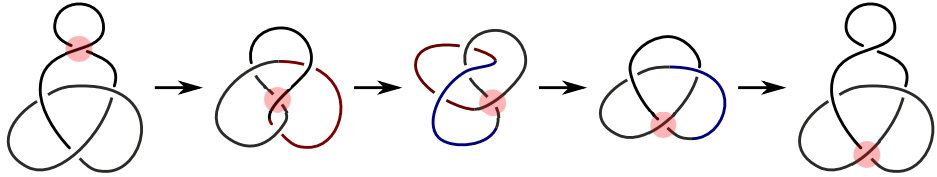}
\caption{A crossing substitutes for another one}\label{pic:crossing_substitute}
\end{figure}

The paper is organized as follows. In Section~\ref{sect:knot_theories} we define categories of tangle, link and knot diagrams. In Section~\ref{sect:indices} we introduce the notions of (signed) index, index polynomial and the universal index. Section~\ref{sect:traits} contains the central result of the paper: we show that the universal trait (the invariant labelling of crossing which contains the most information on them) is the universal signed index (Theorem~\ref{thm:main_theorem}). In Section~\ref{sect:principles} we combine the central result with the description of the universal index from~\cite{Nct} and formulate the principles of indistinguishability and substitution. We conclude the paper with the appendix where we list some known examples of index.

\section{Knot theories}\label{sect:knot_theories}

We start with definitions of virtual knots and related objects.

\begin{definition}\label{def:tangle}
Let $F$ be an oriented compact connected surface with the boundary $\partial F$ (perhaps empty).
An \emph{oriented tangle} in the thickened surface $F\times (0,1)$ is an embedding $T\colon M\to F\times (0,1)$ of an oriented compact $1$-dimensional manifold $M$ such that $T(\partial M)\subset \partial F\times (0,1)$ and $T$ is transversal to $\partial F\times(0,1)$. The image of the map $T$ will be also called a tangle.

A  connected component of a tangle whose boundary is empty is called a \emph{closed component}, otherwise, the components is called \emph{long}.

If a tangle consists of one closed component then it is a \emph{knot}; a tangle consisting of one long component is a \emph{long knot}; a tangle with several component which are all closed, is a \emph{link}.

We consider tangles up to isotopies of $F\times (0,1)$ with the boundary $\partial F\times (0,1)$ fixed.

A \emph{tangle diagram} $D$ is the image of a tangle $T$ in general position under the natural projection $F\times (0,1)\to F$. Combinatorially, a tangle diagram $D$ is an embedded graph in $F$ with vertices of valency $4$ (called \emph{crossings}) and vertices of valency $1$ (forming the \emph{boundary} $\partial D$ of the diagram $D$) such that $D\cap\partial F=\partial D$ and each vertex of valency $4$ carries the structure of under-overcrossing (Fig.~\ref{pic:tangle}).

\begin{figure}[h]
\centering
  \includegraphics[width=0.4\textwidth]{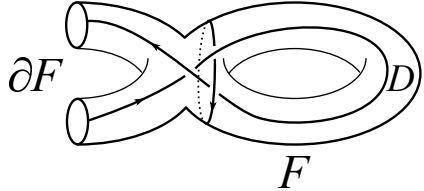}
  \caption{A tangle diagram}\label{pic:tangle}
\end{figure}
\end{definition}

Any tangle diagrams which correspond to isotopical tangles can be connected by a sequence of isotopies of the surface $F$ identical on $\partial F$, and \emph{Reidemeister moves} (Fig.~\ref{pic:reidmove}).

\begin{figure}[h]
\centering
  \includegraphics[width=0.8\textwidth]{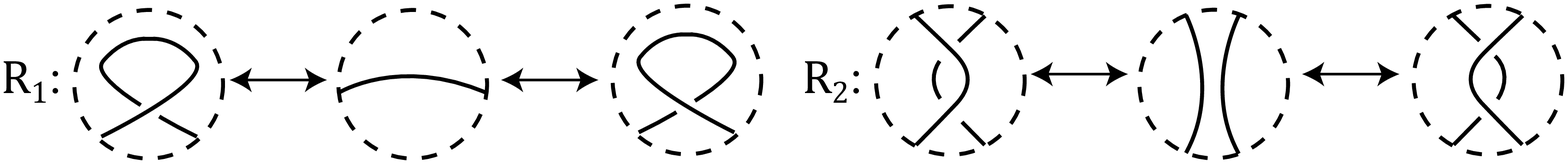}\\ \vbox{\phantom{1em}}
  \includegraphics[width=0.3\textwidth]{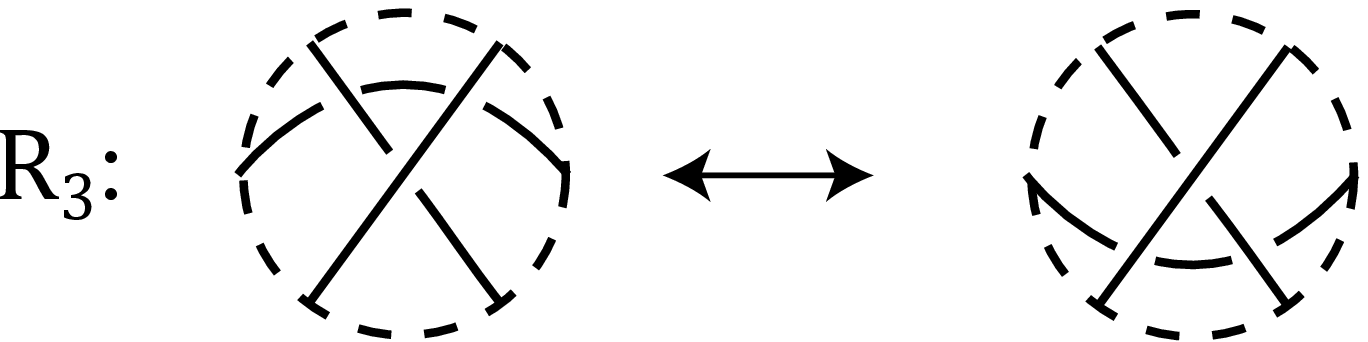}
  \caption{Reidemeister moves}\label{pic:reidmove}
\end{figure}

Let $D$ be a tangle diagram. Denote the set of its crossings by $\V(D)$. We assume that all tangle diagrams are oriented, hence, any crossing $v\in\V(D)$ has a \emph{sign} (Fig.~\ref{pic:crossing_sign}).

\begin{figure}[h]
\centering\includegraphics[width=0.25\textwidth]{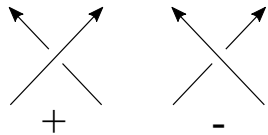}
\caption{The sign of a crossing}\label{pic:crossing_sign}
\end{figure}

\emph{Flat tangles in the surface $F$} can be defined as immersions of an oriented compact 1-manifold $T$ into the thickened surface $F\times(0,1)$ such that $\partial T=T\cap \partial F\times(0,1)$ up to homotopies fixed on the boundary $\partial T$. Combinatorially, one defines flat tangles as equivalence classes of tangle diagrams modulo isotopies, Reidemeister moves and crossing changes (Fig.~\ref{pic:cross_change}).

\begin{figure}[h]
\centering
\includegraphics[width=0.3\textwidth]{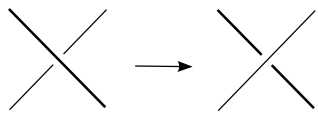}
\caption{Change of a crossing}\label{pic:cross_change}
\end{figure}

Flat tangle diagrams are obtained from tangle diagrams by forgetting under-overcrossing structure (Fig.~\ref{pic:flat_tangle}).

\begin{figure}[h]
\centering
  \includegraphics[width=0.4\textwidth]{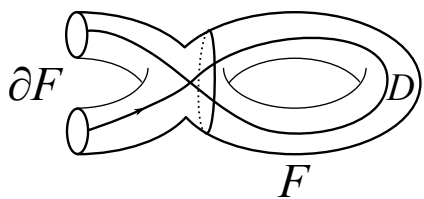}
  \caption{A flat tangle diagram}\label{pic:flat_tangle}
\end{figure}

\emph{Oriented virtual tangles} $\mathcal T$ are equivalence classes of pairs $(F,T)$ where $F$ is an oriented closed surface and $T\subset F\times(0,1)$ is an oriented tangle in the thickening of $F$, considered modulo isotopies, diffeomorphisms and stabilization/destabilization operations~\cite{CKS}. The stabilization operations are: attaching a (thickened) 2-handle distinct from $T$ to $F\times(0,1)$, sealing a boundary component distinct from $T$ with a (thickened) disk, and gluing a (thickened) handle distinct from the tangle boundary $\partial T$ to a boundary component of $F\times(0,1)$, see Fig.~\ref{pic:stabilizations}.

\begin{figure}[h]
\centering\includegraphics[width=0.5\textwidth]{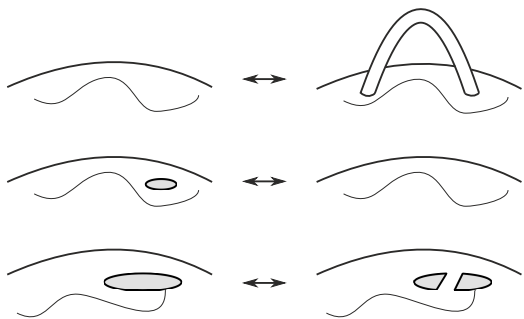}
\caption{The three stabilization operations}\label{pic:stabilizations}
\end{figure}

\emph{Virtual tangle diagrams} are pairs $(F,D)$ where $F$ is a surface and $D\subset F$ is a tangle diagram (a general projection of $T$ for some representative $(F,T)$ of $\mathcal T$). One can consider virtual tangles as the equivalence classes of diagrams up to surface isotopies (fixed on the boundary), Reidemeister moves, diffeomorphisms of pairs $(F,D)$ and (de)stabilizations.

A \emph{flat tangle diagram} is a pair $(F,D)$ where $F$ is an oriented compact connected surface and $D\subset F$ is an embedded graph with vertices of valency $4$ and vertices of valency $1$ (forming the boundary $\partial D$ of the diagram $D$) such that $D\cap\partial F=\partial D$. A \emph{flat tangle} is an equivalence class of flat link diagrams modulo isotopies of the surface, Reidemeister moves, orientation preserving diffeomorphisms of pairs $(F,D)$ and (de)stabilizations.

Another approach to virtual (flat) tangles uses plane virtual diagrams~\cite{K1}.

Fix a disk $\mathbb D$ in the plane $\R^2$.

A {\em virtual diagram} is an embedded graph $D\subset\mathbb D$ which has vertices of valency four (crossings) and one (boundary vertices). The set $\partial D$ of the vertices of valency one coincides with $D\cap\partial\mathbb D$. Each vertex of valency $4$ of the graph $D$ is marked as either {\em classical} of {\em virtual} vertex. Virtual crossings of a virtual diagram  are drawn circled (see Fig.~\ref{pic:virtual_tangle_diagram}). A diagram without virtual crossings is {\em classical}.
\begin{figure}[h]
\centering
\includegraphics[width=0.3\textwidth]{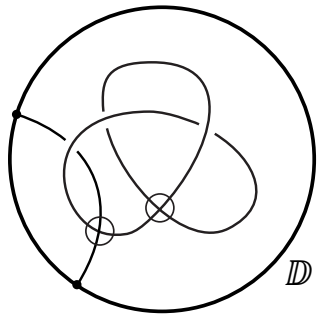}
\caption{Virtual tangle diagram with three classical and two virtual crossings}\label{pic:virtual_tangle_diagram}
\end{figure}
%

{\em Moves of virtual diagrams} include classical Reidemeister moves ($R1$, $R2$, $R3$), {\em detour moves} ($DM$) and \emph{boundary moves} (Fig.~\ref{pic:detour_move}). A detour move replaces any diagram arc, which has only virtual crossings, with a new arc, which has the same ends and contains only virtual crossings. A boundary move interchanges two neighboring boundary vertices of a virtual tangle diagram and adds a virtual crossing.  An equivalence class of virtual diagram modulo moves is called a {\em virtual tangle}.

\begin{figure}[h]
\centering
\includegraphics[width=0.8\textwidth]{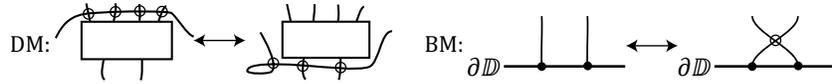}
\caption{Detour and boundary moves}\label{pic:detour_move}
\end{figure}

A {\em classical tangle} is a tangle which has at least one classical diagram.

If one admits crossing switch transformations (Fig.~\ref{pic:cross_change}) of virtual diagrams, i.e. neglects the over-undercrossing structure, one gets the theory of {\em flat tangles}.

Thirdly, oriented virtual tangles can be defined by means of Gauss diagrams~\cite{GPV}. The {\em Gauss diagram} $G=G(D)$ of a virtual tangle diagram $D$ is a chord diagram on an oriented 1-dimensional manifold $G$ which is immersed onto $D$. The chords of the chord diagram correspond to the classical crossings of $D$ (i.e. the double points of the immersion), see Fig.~\ref{pic:virtual_gauss_diagram}. The chords carry an orientation (from over-crossing to under-crossing) and the sign of the crossings, see Fig.~\ref{pic:crossing_sign}.

\begin{figure}[h]
\centering\includegraphics[width=0.3\textwidth]{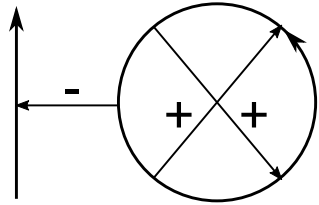}
\caption{The Gauss diagram of the virtual tangle in Fig.~\ref{pic:virtual_tangle_diagram}}\label{pic:virtual_gauss_diagram}
\end{figure}

Classical Reidemeister moves induce transformations of Gauss diagrams (see Fig.~\ref{pic:reidemeister_gauss}). {\em Virtual tangles} are  the equivalence classes of Gauss diagrams modulo the induced Reidemeister moves.

\begin{figure}[h]
\centering\includegraphics[width=0.7\textwidth]{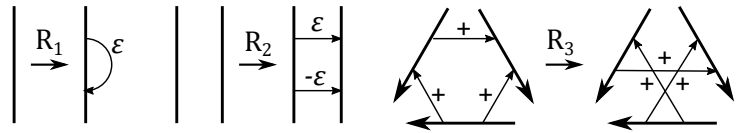}
\caption{Reidemeister moves on Gauss diagrams}\label{pic:reidemeister_gauss}
\end{figure}

\begin{theorem*}
The three definitions of virtual tangles are equivalent.
\end{theorem*}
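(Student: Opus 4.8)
The plan is to prove the three models pairwise equivalent by exhibiting explicit, mutually inverse translations between their representatives and checking that the respective moves correspond. All of this is essentially classical (see Carter--Kamada--Saito~\cite{CKS}, Goussarov--Polyak--Viro~\cite{GPV}, Kauffman~\cite{K1}, and Kuperberg), so I would organize the argument as: first match diagrams on thickened surfaces with Gauss diagrams, then match Gauss diagrams with plane virtual diagrams; composing yields the remaining equivalence.

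For the surface $\leftrightarrow$ Gauss diagram correspondence, to a tangle diagram $D\subset F$ I assign its Gauss diagram $G(D)$ as described in the text: the source $1$-manifold of $D$ with one chord per classical crossing, oriented from the over- to the under-branch and carrying the sign of Fig.~\ref{pic:crossing_sign}. One checks directly that isotopies of $F$ fixed on $\partial F$ and diffeomorphisms of pairs leave $G(D)$ unchanged, that the three Reidemeister moves on $F$ induce exactly the three Reidemeister moves on Gauss diagrams of Fig.~\ref{pic:reidemeister_gauss}, and that a (de)stabilization does not change $G(D)$ at all. For the converse, from an abstract Gauss diagram $G$ one builds a surface $F_G$ by taking an oriented regular neighborhood of the underlying $1$-manifold (bands for long components, annuli for closed ones) and attaching an oriented band for each chord so that its two feet are joined compatibly with the orientation/sign convention; the diagram then embeds into $F_G$ with Gauss diagram $G$. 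The one point needing real work is well-definedness of the class of $F_G$: here I would invoke Kuperberg's theorem that a virtual tangle has a representative surface of minimal genus, unique up to diffeomorphism, so that any two surface diagrams with the same Gauss diagram are related by (de)stabilizations and diffeomorphisms.

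For the Gauss diagram (equivalently, surface) $\leftrightarrow$ plane virtual diagram correspondence, from a virtual diagram $D\subset\mathbb{D}$ I construct an abstract surface by replacing a neighborhood of each classical crossing with a standard crossing disk, each virtual crossing with two disjoint bands abstractly passing over one another (no identification), and thickening the remaining arcs; after capping off, the classical part of $D$ lies on this surface and reads off the same Gauss diagram as $D$ does directly. A detour move alters only the virtual part, hence is an isotopy together with a (de)stabilization of the surface, and a boundary move is realized by sliding a boundary band, so the surface class is a well-defined invariant of the virtual-diagram moves. Conversely, given a diagram on $F$, fix a handle decomposition of $F$ with a single $0$-handle; pushing the diagram into that $0$-handle (a disk in the plane) forces strands that ran through $1$-handles to intersect, and declaring those intersections virtual produces a plane virtual diagram whose Reidemeister, detour and boundary moves match the surface isotopies, Reidemeister moves and (de)stabilizations.

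The main obstacle, and the step I would treat most carefully, is the bookkeeping at the boundary: one must check that the boundary moves on plane diagrams, the requirement that surface isotopies be fixed on $\partial F\times(0,1)$, and the placement of chord feet on the source $1$-manifold of a Gauss diagram all induce the same equivalence on the boundary data — in particular that a boundary handle slide and a boundary move describe literally the same operation and that no extra relation is hidden there. The interior behaviour is the standard closed-/long-knot picture; the genuinely new verification concerns only these boundary interactions.
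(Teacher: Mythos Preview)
Your proposal is correct and is exactly in the spirit of the paper's own treatment: the paper does not give a proof at all, stating only that ``the proof of the theorem reproduces the corresponding proof for virtual links~\cite{CKS,GPV}.'' Your sketch carries out precisely that programme, with the sensible observation that the only genuinely new bookkeeping beyond the closed/long case is at the boundary; this is more detail than the paper itself provides, but it is the intended argument.
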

The proof of the theorem reproduces the corresponding proof for virtual links~\cite{CKS,GPV}.

The factorization by crossing change and virtualization moves (Fig.~\ref{pic:virtualization}) leads to the theory of {\em free tangles}.

\begin{figure}[h]
\centering\includegraphics[width=0.25\textwidth]{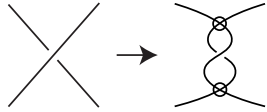}
\caption{Virtualization move}\label{pic:virtualization}
\end{figure}

Crossing change and virtualization moves induces transformations on Gauss diagrams (Fig.~\ref{pic:gauss_virtualization}). Therefore, the theory of free tangles which is obtained by factorization by these transformations, is the theory of chord diagrams without orientation and labels on the chords. The moves of free tangles are the moves in Fig.~\ref{pic:reidemeister_gauss} after one has wiped the arrowheads and the signs.

\begin{figure}[h]
\centering\includegraphics[width=0.25\textwidth]{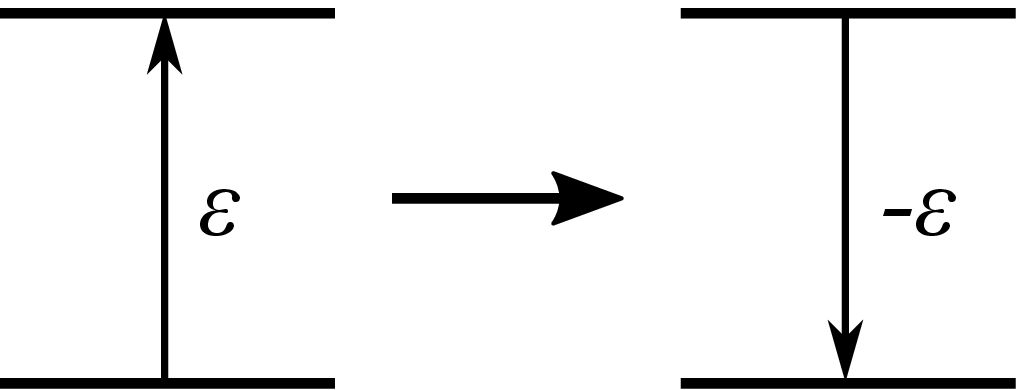}
\qquad\qquad\includegraphics[width=0.25\textwidth]{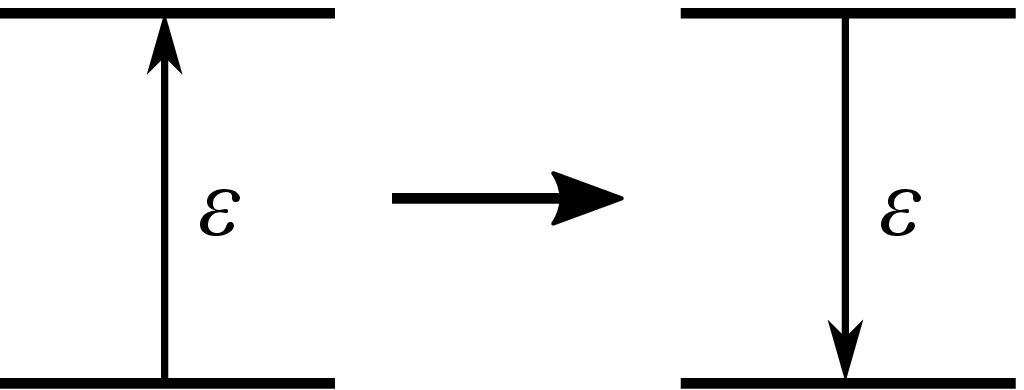}
\caption{Crossing change (left) and virtualization (right) of a Gauss diagram}\label{pic:gauss_virtualization}
\end{figure}

\subsection{Diagram categories}

Let $\mathcal{K}$ be a knot. We shall use the notion of `knot' in
one of the following situations:
 \begin{itemize}
  \item virtual knot (long knot, link or tangle);
  \item flat knot (long knot, link or tangle);
  \item free knot (long knot, link or tangle);
  \item knot (long knot, link or tangle) in a fixed surface;
  \item flat knot (long knot, link or tangle) in a fixed surface;
  \item classical knot (long knot, link or tangle).
 \end{itemize}

We always assume that the components of tangle are numbered and oriented.

\begin{definition}\label{def:knot_category}
The {\em category of diagrams $\mathfrak{K}=\mathfrak{K}(\mathcal K)$ of the knot $\mathcal{K}$}
is the small category whose objects are the diagrams of $\mathcal{K}$ and
morphisms are (formal) compositions
of {\em elementary morphisms}. By an elementary morphism we mean
 \begin{itemize}
  \item an isotopy of a diagram;
  \item an isomorphism between diagrams (in case of virtual, flat or free tangles);
  \item a Reidemeister move.
 \end{itemize}
\end{definition}

Given a morphism $f\colon D\to D'$ in $\mathfrak{K}$, generally there is no bijection between the sets of
crossings $\V(D)$ and $\V(D')$ of the diagrams because the number of crossings of a diagram may change under
Reidemeister moves. But there is a bijection for the crossings which survives the morphism: there are subsets $\bar\V(D)\subset \V(D)$ and $\bar\V(D')\subset \V(D')$ which are in an one-to-one correspondence $\bar\V(D)\to \bar\V(D')$. We denote this correspondence by $f_*$ and the subsets by $dom(f_*)=\bar\V(D)$ and $im(f_*)=\bar\V(D')$.

Usually, one works not with one knot or tangle but with knots from a certain class (e.g. virtual knots, virtual links, free knots etc.). In this case, we have to deal with diagrams of all such knots, that is work with diagram category which is the union of the categories of diagrams on the knots. We call this ``large'' categories \emph{knot theories}.

\begin{definition}\label{def:knot_theory}
Given a tangle $T$, its \emph{type} $t(T)$ is the pair $(m,n)$ where $m$ is the number of closed components in $T$ and $n$ is the number of the long components.

For a compact oriented surface $F$, the \emph{theory of tangles of type $(m,n)$ with the boundary $B$ in the surface $F$} is the diagram category
\[
\mathfrak{T}_{m,n}(F,B)=\bigsqcup_{T\colon t(T)=(m,n),\ \partial T=B}\mathfrak K(T),
\]
where $B\subset \partial F$, $|B|=2n$.

The \emph{theory of flat tangles of type $(m,n)$ with the boundary $B$ in the surface $F$} is the union
\[
\mathfrak{FT}_{m,n}(F,B)=\bigsqcup_{T\mbox{\scriptsize\ is flat}\colon t(T)=(m,n),\ \partial T=B}\mathfrak K(T).
\]
The \emph{theory of virtual tangles of type $(m,n)$} is the diagram category
\[
\mathfrak{VT}_{m,n}=\bigsqcup_{T\mbox{\scriptsize\ is a virtual tangle}\colon t(T)=(m,n)}\mathfrak K(T),
\]
the \emph{theory of flat tangles of type $(m,n)$} is the diagram category
\[
\mathfrak{FT}_{m,n}=\bigsqcup_{T\mbox{\scriptsize\ is a flat tangle}\colon t(T)=(m,n)}\mathfrak K(T),
\]
and the \emph{theory of free tangles of type $(m,n)$} is the diagram category
\[
\mathfrak{FrT}_{m,n}=\bigsqcup_{T\mbox{\scriptsize\ is a free tangle}\colon t(T)=(m,n)}\mathfrak K(T).
\]

As special cases, we get the \emph{theories of classical knots} $\mathfrak{CK}=\mathfrak{T}_{1,0}(S^2,\emptyset)$, \emph{virtual knots} $\mathfrak{VK}=\mathfrak{VT}_{1,0}$, \emph{long virtual knots} $\mathfrak{LVK}=\mathfrak{VT}_{0,1}$, \emph{virtual links with $m$ components} $\mathfrak{VL}_m=\mathfrak{VT}_{m,0}$ etc.
\end{definition}

\begin{remark}
Knot theories are connected by natural transformations. For example, we have a sequence of projections $\mathfrak{T}_{m,n}(F,B)\to \mathfrak{VT}_{m,n}\to \mathfrak{FT}_{m,n}\to \mathfrak{FrT}_{m,n}$ and  $\mathfrak{T}_{m,n}(F,B)\to \mathfrak{FT}_{m,n}(F,B)$ from (virtual) tangles to flat tangles and then to free tangles. In particular, that means invariants of free tangles can be lifted to invariants of flat tangles, and invariants of flat tangles are lifted to invariants of virtual tangles.
\end{remark}

We will use the name \emph{diagram category} both for category of diagrams of a given tangle and for knot theories defined above.

\section{Indices}\label{sect:indices}

The notion of index was axiomatized by Z. Cheng~\cite{Cheng2} in the spirit of Manturov's parity axioms~\cite{M3}. M. Xu~\cite{Xu} found the general conditions for an index to define an invariant polynomial and introduced the notion a weak chord index. See also~\cite{Cheng2,CGX,CFGMX}  for other index axiomatics and examples of chord indices.

\begin{definition}\label{def:index}
Let $\mathfrak K$ be a diagram category. An \emph{index with coefficients in a set $I$ on the diagram category $\mathfrak K$} is a map $\iota$ which assigns some value $\iota(v)\in I$ to each crossing $v\in\V(D)$ in each diagram $D\in\mathfrak K$ of the diagram category and possesses the following properties:
\begin{itemize}
\item[(I0)] for any elementary morphism $f\colon D\to D'$ in $\mathfrak K$ and any crossings $v\in dom(f_*)$ one has $\iota(v)=\iota(f_*(v))$;
\item[(I2)] $\iota(v_1)=\iota(v_2)$ for any crossings $v_1,\,v_2\in\V(D)$ to which a decreasing second Reidemeister move can be applied.
\end{itemize}

Let $S$ be a set with an involution $\ast\colon S\to S$. A \emph{signed index} with coefficients in the set $S$ on the diagram category $\mathfrak K$ is a map $\sigma\colon\bigsqcup_{D\in\mathfrak K}\V(D)\to S$ which satisfies the property (I0) and the property
\begin{itemize}
  \item[(I2+)] for any crossings $v_1,\,v_2\in\V(D)$ to which a decreasing second Reidemeister move can be applied, one has $\sigma(v_1)=\sigma(v_2)^\ast$.
\end{itemize}
\end{definition}

\begin{remark}\label{rem:index_basic}
1. For a (signed) index, the property (I0) holds for all morphisms of the diagram category $\mathfrak K$.

2. An index $\iota$ with coefficients in a set $I$ can be considered as a signed index if one takes the trivial involution on $I$: $x^*=x$ for all $x\in I$.

3. If $\sigma$ is a signed index with coefficients is a set $S$ with an involution $\ast$ then the composition $\bar\sigma=p\circ\sigma$ of $\sigma$ with the projection $p\colon S\to S/\ast$, where $S/\ast$ is the quotient set by the involution action, is an index with coefficients in the set $S/\ast$.
\end{remark}

\begin{example}\label{exa:sign_signed_index}
The sign function is a signed index with coefficients in $\Z_2=\{-1,+1\}$ for the diagram categories $\mathfrak{T}_{m,n}(F,B)$ and $\mathfrak{VT}_{m,n}$ of tangles in a surface and virtual tangles and all their diagram subcategories.
\end{example}

Usually when an index is mentioned, one means the $\Z$-valued index appeared in~\cite{Cheng,K2} or a $\Z$-valued signed index introduced in~\cite{T,Henrich,ILL,FK}. In order to differ it from the general notion of index, we will call it the \emph{Gaussian index} by analogy with the Gaussian parity~\cite{IMN1}.

\begin{example}[Gaussian index]\label{exa:classic_index}
Let $K$ be a knot in a surface $F$ or a virtual knot. For any diagram $D$ of $K$ and any crossing $v\in\V(D)$, the oriented smoothing at $v$ splits the component $D_i$ into two the \emph{left half} $D^l_v$ and the \emph{right half} $D^r_v$  (Fig.~\ref{pic:knot_halves}). We define also the \emph{signed halves} $D^\pm_v$ by the formula $D^{sgn(v)}_v=D^l_v$ and $D^{-sgn(v)}_v=D^r_v$.

\begin{figure}[h]
\centering\includegraphics[width=0.4\textwidth]{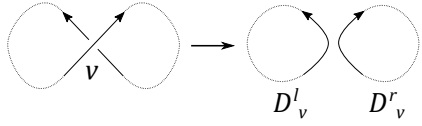}
\caption{The left and the right halves of the diagram}\label{pic:knot_halves}
\end{figure}

Then define the \emph{Gaussian index} of the crossing as the intersection number of two cycles: $Ind(v)=D\cdot D^+_v\in\Z$. Then the map $Ind$ obeys the conditions (I0) and (I2). It is an index with coefficients in $\Z$ on diagram categories of  virtual knots and knots in a fixed surface.

The index $n$ defined by V. Turaev~\cite{T} can be obtained by the formula $n(v)=D\cdot D^l_v$,. In fact, it is a signed index with coefficients in $\Z$ on diagram categories of virtual and flat knots and (flat) knots in a fixed surface.

For virtual knots, the indices $Ind$ and $n$ are connected by the formula $Ind(v)=sgn(v)\cdot n(v)$.
\end{example}

In the presence of the sign function there is a correspondence between indices and signed indices as the previous example shows. Let us describe it in a more general setting.

\begin{proposition}\label{prop:sign_and_index}
Let $\mathfrak K$ be a diagram category $\mathfrak T_{m,n}(F,B)$, $\mathfrak{VT}_{m,n}$ or any subcategory of those.
\begin{enumerate}
  \item If $\sigma$ is a signed index with coefficients in a set $S$ with an involution $\ast$. Then the map $\hat\sigma$ such that $\hat\sigma(v)=\sigma(v)^{sgn(v)}$, where $v\in\V(D)$, $D\in\mathfrak K$, and $x^{+1}=x$ and $x^{-1}=x^\ast$ for $x\in S$, is an index with coefficients in the set $S$ on the diagram category $\mathfrak K$.
  \item  If $\iota$ is an index with coefficients in a set $S$ with an involution $\ast$. Then the map $\hat\iota$ such that $\hat\iota(v)=\iota(v)^{sgn(v)}$, $v\in\V(D)$, $D\in\mathfrak K$, is a signed index with coefficients in the set $S$ on the diagram category $\mathfrak K$.
  \item If $\iota$ is an index with coefficients in a set $I$ on $\mathfrak K$. Then the map $\tilde\iota$ such that $\tilde\iota(v)=(\iota(v),sgn(v))\in I\times\{-1,+1\}$ is a signed index on $\mathfrak K$ with coefficients in the set $I\times\{-1,+1\}$ with the involution $(x,\epsilon)^\ast=(x,-\epsilon)$, $x\in I$, $\epsilon\in\{-1,+1\}$.
\end{enumerate}
\end{proposition}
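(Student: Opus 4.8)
The plan is to verify directly that each of the three constructed maps satisfies the defining axioms of a (signed) index; the only inputs needed are that the sign function is itself a signed index on $\mathfrak K$ with the sign-flip involution on $\Z_2$ (Example~\ref{exa:sign_signed_index}), that in a decreasing second Reidemeister move the two disappearing crossings $v_1,v_2$ satisfy $sgn(v_1)=-sgn(v_2)$, and that an involution squares to the identity.

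For the invariance axiom (I0): in all three cases the new label at a crossing $v$ depends only on $\iota(v)$ (resp.\ $\sigma(v)$) and $sgn(v)$. Since $\iota$ (resp.\ $\sigma$) and $sgn$ both satisfy (I0), and in particular $sgn(v)=sgn(f_*(v))$ for every elementary morphism $f$ and every $v\in dom(f_*)$, the exponent (resp.\ the second coordinate) is preserved; hence $\hat\sigma$, $\hat\iota$ and $\tilde\iota$ all satisfy (I0) on elementary morphisms, and therefore on all morphisms of $\mathfrak K$ by composition (cf.\ Remark~\ref{rem:index_basic}).

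The remaining content is the behaviour under a decreasing second Reidemeister move acting on a pair $v_1,v_2$. For part~1 I would split into the cases $sgn(v_2)=+1$ and $sgn(v_2)=-1$; feeding in (I2+) for $\sigma$, i.e.\ $\sigma(v_1)=\sigma(v_2)^\ast$, together with $(x^\ast)^\ast=x$, one checks in each case that $\sigma(v_1)^{sgn(v_1)}=\sigma(v_2)^{sgn(v_2)}$, which is exactly (I2) for $\hat\sigma$. Part~2 is the mirror-image computation: from (I2) for $\iota$, namely $\iota(v_1)=\iota(v_2)$, and $sgn(v_1)=-sgn(v_2)$, one obtains $\iota(v_1)^{sgn(v_1)}=\bigl(\iota(v_2)^{sgn(v_2)}\bigr)^\ast$, which is (I2+) for $\hat\iota$. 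Part~3 is immediate: $\tilde\iota(v_1)=(\iota(v_2),-sgn(v_2))=\tilde\iota(v_2)^\ast$.

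There is no genuine obstacle here; the only place demanding a little care is the two-case sign analysis in parts~1 and~2, where one must track exactly where the involution property $(x^\ast)^\ast=x$ is used --- it is precisely this cancellation that makes twisting by the sign compatible with pairing by a second Reidemeister move. One could instead phrase all three parts uniformly, observing that the map $v\mapsto sgn(v)$ being a signed index turns ``twist by sign'' into a morphism of the relevant structures, but the direct check is short enough that I would simply carry it out.
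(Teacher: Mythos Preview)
Your direct verification is correct and complete. The paper itself states this proposition without proof, treating it as an immediate consequence of the definitions together with the fact that the sign function is a signed index (Example~\ref{exa:sign_signed_index}); your argument is precisely the routine check the paper omits, and it is the natural one.
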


Let us describe how a (signed) index relates first Reidemeister move. A first Reidemeister move can create a loop crossing of one of the four types in Fig.~\ref{pic:loop_types}.

\begin{figure}[h]
\centering\includegraphics[width=0.35\textwidth]{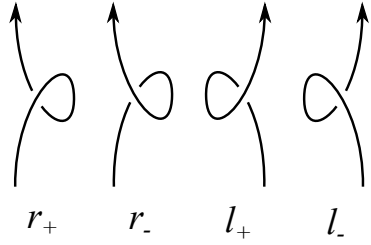}
\caption{Types of loops}\label{pic:loop_types}
\end{figure}

\begin{proposition}\label{prop:index_r1}
Let $\mathfrak K=\mathfrak K(\mathcal T)$ be the category of diagrams of a tangle $\mathcal T=T_1\cup\cdots\cup T_l$ with $l$ component, $\iota$ be an index on $\mathfrak K$ with coefficients in a set $I$, and $\sigma$ be a signed index on $\mathfrak K$ with coefficients in a set with involution $S$. Then there exist values $\iota_i^{l\pm}, \iota_i^{r\pm}\in I$ and  $\sigma_i^{l\pm}, \sigma_i^{r\pm}\in S$, $i=1,\dots,l$, such that

\begin{enumerate}
  \item any loop crossing $v$ of type $l+$ (corr., $l-$, $r+$, $r-$) in a component $T_i$ has the index value $\iota_i^{l+}$ (corr., $\iota_i^{l-}, \iota_i^{r+}, \iota_i^{r-}$) and the signed index value $\sigma_i^{l+}$ (corr., $\sigma_i^{l-}, \sigma_i^{r+}, \sigma_i^{r-}$);
  \item $\iota_i^{l+}=\iota_i^{r-}$, $\iota_i^{l-}=\iota_i^{r+}$, and  $\sigma_i^{r-}= (\sigma_i^{l+})^\ast$, $\sigma_i^{r+}= (\sigma_i^{l-})^\ast$, $i=1,\dots,l$.
\end{enumerate}
\end{proposition}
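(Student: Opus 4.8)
The plan is to reduce everything to a direct analysis of the two possible configurations of a loop crossing and how an elementary first Reidemeister move creates (or destroys) it. First I would fix, for each component $T_i$ and each of the four loop types $l\pm, r\pm$, a single "model" first Reidemeister move $f$ that introduces a loop crossing $v$ of that type on $T_i$; the crossing $v$ lies in $\im(f_*)$'s complement, i.e. it is the new crossing created by the move, while all other crossings survive. The first task is to show the index value $\iota(v)$ depends only on the type and the component, not on the rest of the diagram. For this I would use the fact (Remark~\ref{rem:index_basic}.1 together with Proposition~\ref{prop:index_r1}'s hypothesis that $\mathfrak K=\mathfrak K(\mathcal T)$ is the category of \emph{all} diagrams of one fixed tangle) that any two loop crossings of the same type on the same component are related by a morphism in $\mathfrak K$ carrying one to the other: one can always first contract the diagram near one loop, move through diagrams of $\mathcal T$, and re-expand near the other loop, so that the composite morphism $g$ satisfies $g_*(v_1)=v_2$. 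Then (I0) gives $\iota(v_1)=\iota(v_2)$, and similarly $\sigma(v_1)=\sigma(v_2)$ (no sign twist, since $g$ is a genuine morphism and (I0) applies verbatim). This defines the constants $\iota_i^{l\pm}, \iota_i^{r\pm}$ and $\sigma_i^{l\pm}, \sigma_i^{r\pm}$ and establishes part (1).

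For part (2), the key observation is that a single loop crossing admits \emph{two} decreasing first Reidemeister moves — or, more usefully, that the same loop crossing $v$ can be viewed simultaneously as a type-$l+$ loop and as a type-$r-$ loop, depending on which of the two strands emanating from the loop one regards as the "returning" strand. Concretely, I would draw the monogon bounded by the loop: reading the crossing with its orientation and sign, the four local pictures in Fig.~\ref{pic:loop_types} are not independent — the picture labelled $l+$ is literally the same tangle diagram as the one labelled $r-$ (they differ only in how the ambient strand is drawn far from the crossing, which is immaterial to the crossing's local type). Hence a crossing that is an $l+$-loop is also an $r-$-loop, forcing $\iota_i^{l+}=\iota_i^{r-}$, and likewise $\iota_i^{l-}=\iota_i^{r+}$. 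For the signed index the same identification holds at the level of diagrams, but now I must track the effect of the involution: the decreasing second Reidemeister move (property (I2+)) is the relevant tool. Specifically, given an $l+$-loop crossing $v$ on $T_i$, I would exhibit a diagram of $\mathcal T$ containing two crossings $v_1, v_2$ to which a decreasing R2 move applies, where $v_1$ is an $l+$-loop on $T_i$ and $v_2$ is an $r-$-loop on $T_i$ — this is the standard picture where a small R2 "finger" pushed across a loop produces a cancelling pair, one of each flavour. Then (I2+) gives $\sigma_i^{r-}=(\sigma_i^{l+})^\ast$, and the mirror picture gives $\sigma_i^{r+}=(\sigma_i^{l-})^\ast$.

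The main obstacle I anticipate is the bookkeeping in this last step: one must verify that the R2 pair in the model picture really does consist of one $l+$-loop and one $r-$-loop on the \emph{same} component (not, say, an $l+$ and an $l-$), and that the orientations and signs work out so that the involution — and not the identity — is what appears. This is a finite check over the four loop types in Fig.~\ref{pic:loop_types} and the two orientations of the R2 strands, but it is the place where a sign error would invalidate the clean formulas $\sigma_i^{r-}=(\sigma_i^{l+})^\ast$, $\sigma_i^{r+}=(\sigma_i^{l-})^\ast$. A secondary, more routine, point is confirming that "contract near one loop, re-expand near another" genuinely stays inside $\mathfrak K(\mathcal T)$ — this is immediate because contracting a loop is a decreasing R1 move and both diagrams represent the same tangle $\mathcal T$, so they are objects of the same category and any morphism built from elementary moves between them is admissible. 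Once these local pictures are pinned down, parts (1) and (2) follow formally from (I0) and (I2+).
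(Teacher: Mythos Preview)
Your proposal has the right ingredients but contains two genuine gaps.

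First, the claim that an $l+$ loop is ``literally the same tangle diagram'' as an $r-$ loop is false: these crossings have opposite signs, and the sign of a crossing is preserved by every morphism that keeps the crossing alive. The four loop types in Fig.~\ref{pic:loop_types} are genuinely distinct local pictures, and the equality $\iota_i^{l+}=\iota_i^{r-}$ is not a tautology --- it is a consequence of property (I2). The paper makes no separate argument for $\iota$ here; it invokes Remark~\ref{rem:index_basic} to reduce to the signed-index case from the start.

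Second, your R2 configuration for the signed index is not the right one. You ask for a single diagram in which $v_1$ is simultaneously an $l+$-loop and one half of an R2 bigon while $v_2$ is an $r-$-loop and the other half; but a ``finger pushed across a loop'' produces two new non-loop crossings, not a pair of loops, and forcing both crossings of a bigon to also bound monogons leaves no room for the rest of $\mathcal T$. What the paper does instead is: place an $l+$-loop $v$ and an $r-$-loop $w''$ adjacently on the same arc (it uses a doubling trick to transport $w$ from its original diagram into $D_1$ without destroying it), and then apply the local moves of Fig.~\ref{pic:index_r1} to carry the pair $(v,w'')$ into an R2 configuration. In the final picture neither crossing is a loop crossing any more, but by (I0) their values are still $\sigma_i^{l+}$ and $\sigma_i^{r-}$, and then (I2+) yields $\sigma_i^{r-}=(\sigma_i^{l+})^\ast$.

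Your approach to part (1) --- directly constructing $g$ with $g_*(v_1)=v_2$ --- can be made to work by sliding the loop along the component past intervening crossings, but the phrase ``contract the diagram near one loop, move, re-expand'' is ambiguous; read as contracting the loop by R1 it destroys $v_1$ and gives no tracking at all. The paper sidesteps this entirely by reversing your order: it proves the $l+$/$r-$ duality \emph{first}, and then deduces part (1) from it via $\sigma(v')=\sigma(w)^\ast=(\sigma(v)^\ast)^\ast=\sigma(v)$ for any auxiliary $r-$-loop $w$. That structural reversal is the main difference between your plan and the paper's proof, and it is what lets the doubling trick do all the heavy lifting in one place.
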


\begin{proof}
Due to Remark~\ref{rem:index_basic} it is enough to give a proof for the signed index.

Let $v\in\V(D_1)$ be any loop index of type $l+$ in a component $T_i$, and $w\in\V(D_2)$ be a loop crossing of type $r-$ in $T_i$, $D_1,D_2\in\mathfrak K$. We show that $\sigma(w)=\sigma(v)^\ast$.

Since $D_1$ and $D_2$ are diagrams of one tangle $\mathcal T$, there is a morphism $f\colon D_2\to D_1$. We modify the morphism $f$ as follows: we add a double $w'$ of the crossing $w$ in the same component of the diagram $D_2$ and use the double instead of $w$ in the Reidemeister moves of the morphism $f$, and keep the loop with the crossing $w$ somewhere on the diagrams. Thus, we get a sequence of diagrams $D_2\stackrel{R_1}{\rightarrow}D_2'\stackrel{f'}{\rightarrow} D'_1$ where $D_2'$ is the diagram with the double added, $f'$ is the morphism $f$ with added loop $w$, and $D'_1$ differs from $D_1$ by a first Reidemeister move which adds the loop crossing $w''=f'_*(w)$.

We can move the crossing $f'_*(w)$ to the crossing $v$ and assume that these crossings are neighboring in the diagram  $D_1'$. Then move the crossing $w''$ along the component as shown in Fig.~\ref{pic:index_r1}. Then by the property $(I2+)$  $\sigma(w'')=\sigma(v)^\ast$. By the property $(I0)$ $\sigma(w)=\sigma(f'_*(w))=\sigma(w'')=\sigma(v)^\ast$.

\begin{figure}[h]
\centering\includegraphics[width=0.5\textwidth]{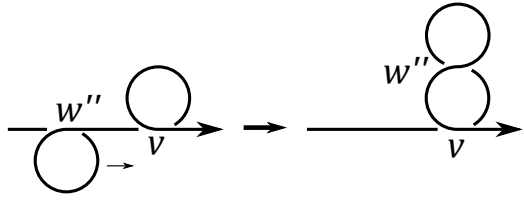}
\caption{Pairing loops}\label{pic:index_r1}
\end{figure}

Let $v\in\V(D)$ and $v'\in\V(D')$ be two loop crossings of the type $l+$. Create in some diagram $D''\in\mathfrak K$ a loop crossing $w$ of type $r-$ in the $i$-th component. Then $\sigma(v')=\sigma(w)^\ast=(\sigma(v)^\ast)^\ast=\sigma(v)$. Thus, all the loop crossings of type $l+$ in the component $T_i$ have the same signed index value $\sigma_i^{l+}\in S$. The proof for other types is analogous.

The equality $\sigma_i^{r-}=(\sigma_i^{l+})^\ast$ was proved above. Analogously, $\sigma_i^{r+}=(\sigma_i^{l-})^\ast$.
\end{proof}

\begin{remark}\label{rem:index_r1}
\begin{enumerate}
  \item The values $\iota_i^{l\pm}, \iota_i^{r\pm}\in I$ and $\sigma_i^{l\pm}, \sigma_i^{r\pm}\in S$ can coincide;
  \item if $\mathcal T$ is a flat tangle and $\iota$ is an index, and $\sigma$ is a signed index on the category $\mathfrak K(\mathcal T)$ of diagrams of $\mathcal K$, then the crossings don't have signs, and the loop crossings in one component of the tangle have one index value $\iota_i=\iota_i^{l\pm}=\iota_i^{r\pm}$, and two signed index values $\sigma_i^l$ and $\sigma_i^r=(\sigma_i^l)^\ast$;
  \item if $\mathcal K$ is a free tangle then all the loop crossings on the same component have the same index or signed index value.
  \item if one considers a knot theory $\mathfrak K$ which consists of diagram categories $\mathfrak K(T)$ of different tangles, the loop indices can be individual for each tangle $T$: $\iota_i^{l\pm}(T), \iota_i^{r\pm}(T)$.

      Some indices may have one loop index value for different tangles. For example, for the $\Z$-valued indices $Ind$ and $n$, the loop index values are all equal to zero.
\end{enumerate}
\end{remark}

Thus, the first Reidemeister move can be localized and isolated by neglecting the index values $\iota^{l\pm},\iota^{r\pm}$ ($\sigma^{l\pm}, \sigma^{r\pm}$). We can use this fact and define a tangle invariant (the index polynomial).

\subsection{Index polynomial}

\begin{definition}\label{def:linking_invariant}
Let $\mathfrak K$ be a diagram category and $\sigma$ be a signed index on $\mathfrak K$ with coefficients in a set $S$. Let $L\subset S$ be the set of loop index values, i.e. $L=\bigcup_{i}\{\sigma_i^{l\pm},\sigma_i^{r\pm}\}$ if $\mathfrak K$ is the category of diagrams of a tangle $T=\bigcup_i T_i$, or $L=\bigcup_{T\in{\mathfrak K},\,i}\{\sigma_i^{l\pm}(T),\sigma_i^{r\pm}(T)\}$ if $\mathfrak K$ is a knot theory. Note that $L^\ast=L$.

Consider the abelian group $A_\sigma=\Z[S\setminus L]/\langle 1\cdot x+1\cdot x^\ast \mid x\in S\setminus L\rangle$.

Let $T$ be a tangle presented by a diagram $D\in\mathfrak K$. The \emph{linking invariant} (or \emph{(odd) index polynomial}) $lk_\sigma(T)$ of the tangle $T$ is the element
\begin{equation}\label{eq:signed_index_linking_invariant}
  lk_\sigma(T)=\sum_{v\in\V(D)\,\colon\,\sigma(v)\not\in L}1\cdot\sigma(v)\in A_\sigma.
\end{equation}

For an index $\iota$ on $\mathfrak K$ with coefficients in a set $I$ we define the \emph{linking invariant} $lk_\iota(T)$ as $lk_{\tilde\iota}(T)$ if the category $\mathfrak K$ has the sign function, and as $lk_\iota(T)$ if $\mathfrak K$ is a diagram category of flat or free tangles. In the latter case we consider the trivial involution on the set $I$.
\end{definition}

\begin{proposition}\label{prop:linking_invariant}
The index polynomial $lk_\sigma$ is an invariant of tangles in the diagram category $\mathfrak K$, i.e. the value $lk_\sigma(T)$ does not depend on the choice of a diagram $D$ representing the tangle $T$.
\end{proposition}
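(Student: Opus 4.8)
The plan is to check that $lk_\sigma(T)$ is unchanged by every elementary morphism $f\colon D\to D'$ between two diagrams of $T$; since any two diagrams of $T$ are joined by a finite chain of such morphisms, this suffices. Throughout I use the fact (Remark~\ref{rem:index_basic}) that (I0) holds for \emph{all} morphisms of $\mathfrak K$, and that on the surviving crossings $f$ induces a bijection $f_*\colon dom(f_*)\to im(f_*)$ with $\sigma(f_*(v))=\sigma(v)$.

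First, if $f$ is an isotopy or an isomorphism of diagrams, then $f_*$ is a bijection $\V(D)\to\V(D')$ preserving $\sigma$; in particular it preserves the condition $\sigma(v)\notin L$, so the two sums defining $lk_\sigma(T)$ agree term by term. The same applies to a third Reidemeister move, which likewise induces a $\sigma$-compatible bijection $\V(D)\to\V(D')$. Next consider an increasing second Reidemeister move, so that $\V(D')=f_*(\V(D))\sqcup\{v_1,v_2\}$ with $v_1,v_2$ the two new crossings, to which a decreasing $R2$ move applies; by (I2+) we have $\sigma(v_1)=\sigma(v_2)^\ast$. Since $L^\ast=L$, either both $\sigma(v_1),\sigma(v_2)\in L$, in which case neither contributes, or both lie in $S\setminus L$, in which case their joint contribution is $1\cdot\sigma(v_1)+1\cdot\sigma(v_1)^\ast$, which is $0$ in $A_\sigma=\Z[S\setminus L]/\langle 1\cdot x+1\cdot x^\ast\mid x\in S\setminus L\rangle$. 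Either way the sum is unchanged, and the decreasing $R2$ move is symmetric.

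Finally, a first Reidemeister move creates or deletes a single loop crossing $v$; by Proposition~\ref{prop:index_r1} (and by the definition of $L$ in Definition~\ref{def:linking_invariant} when $\mathfrak K$ is a knot theory rather than the diagram category of one tangle) its value $\sigma(v)$ lies in $L$, so $v$ never enters the sum and $lk_\sigma$ is unaffected. This establishes the invariance of $lk_\sigma$. For an index $\iota$ the statement follows from the signed case together with Proposition~\ref{prop:sign_and_index}: by definition $lk_\iota=lk_{\tilde\iota}$ when $\mathfrak K$ carries the sign function and $lk_\iota$ with the trivial involution in the flat or free setting, and in both cases the relevant map is a signed index.

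The one point that genuinely needs care is the second Reidemeister move: one must observe that the two crossings it introduces are simultaneously inside or outside $L$ (this is exactly $L^\ast=L$), and that when they are outside $L$ their contributions cancel in $A_\sigma$ — which is precisely the reason the relations $1\cdot x+1\cdot x^\ast$ are imposed in the definition of $A_\sigma$. The remaining cases are bookkeeping about the bijections $f_*$.
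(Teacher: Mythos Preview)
Your proof is correct and follows essentially the same approach as the paper: reduce to elementary morphisms, use (I0) for the crossing-preserving moves, use (I2+) together with the relations $1\cdot x+1\cdot x^\ast=0$ in $A_\sigma$ for the second Reidemeister move, and use the exclusion of $L$ for the first Reidemeister move. Your write-up simply makes explicit the details (the role of $L^\ast=L$, the termwise bijection for $R3$, and the index case via $\tilde\iota$) that the paper's brief proof leaves implicit.
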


\begin{proof}
It is enough to prove that for any elementary morphism  $f\colon D_1\to D_2$ in $\mathfrak K$ the values of $lk_\sigma$ for $D_1$ and $D_2$ coincide. When $f$ does not change the number of crossing, this follows from the property (I0). If $f$ is a second Reidemeister move then the equality follows from the property (I2). We secure the invariance under first Reidemeister moves by excluding the loop index values.
\end{proof}

\begin{remark}\label{rem:linking_invariant}
1. The group $A_\sigma$ is a direct sum of the cyclic groups $\Z$ (one summand for each pair $\{x,x^\ast\}$ such that $x\in S\setminus L$ and $x\ne x^\ast$) and $\Z_2$ (one summand for each self-dual element $x=x^\ast\in S\setminus L$).

For an index $\iota$ with coefficients in a set $I$, the group $A_\iota=A_{\tilde\iota}$ is isomorphic to $\Z[I\setminus L]$ in the presence of the sign function on the crossings (one maps $(x,\epsilon)$ to $\epsilon\cdot x$), and $A_\iota=\Z_2[I\setminus L]$ for diagram categories of flat or free tangles.

2. Let $\iota$ be an index on a diagram category $\mathfrak K$ with coefficients in a set $I$, and the crossings of the diagrams in $\mathfrak K$ have signs. Using the isomorphism  $A_\iota\simeq\Z[I\setminus L]$ we can write the formula the odd index polynomial of a tangle $T$ as follows
\begin{equation}\label{eq:index_linking_polynomial}
  lk_\iota(T)=\sum_{v\in\V(D)\,\colon\,\iota(v)\not\in L}sgn(x)\cdot\iota(v)\in\Z[I\setminus L]
\end{equation}
where $D\in\mathfrak K$ is a diagram of the tangle $T$.

The formula~\eqref{eq:index_linking_polynomial} can be considered as a generalization of the well-known formula for the linking number. This justifies the notation $lk$ which we use for the odd index polynomial.
\end{remark}

\begin{example}\label{exa:classical_index_polynomial}
1. Let $\mathfrak{VK}$ be the theory of virtual knots. Let $Ind$ be the index with coefficients in $\Z$ defined in Example~\ref{exa:classic_index}. The set of loop index values is $L=\{0\}$. Then the linking invariant of a virtual knot $K$ given by a diagram $D\in\mathfrak{VK}$
\[
lk_{Ind}(K)=\sum_{v\in\V(D)\,\colon\, Ind(v)\ne 0}sgn(v)\cdot Ind(v)\in\Z[\Z\setminus\{0\}]
\]
is (up to isomorphism and the multiplier $t$) the odd index polynomial $f(t)$ defined in~\cite{Cheng}.

2. Let $\mathfrak{FK}$ be the theory of flat knots. Let $n$ be the index with coefficients in $\Z$ defined in Example~\ref{exa:classic_index}. The set of loop index values is $L=\{0\}$. The group $A_n$ is isomorphic to the group $\Z[\mathbb N]$  by the formula $x\mapsto sgn(x)\cdot |x|$. Using this isomorphism,  we write the linking invariant of a flat knot $K$ with a diagram $D\in\mathfrak{FK}$ as
\[
lk_{n}(K)=\sum_{v\in\V(D)\,\colon\, n(v)\ne 0}sgn(n(v))\cdot |n(v)|\in\Z[\mathbb N].
\]
The linking invariant $lk_{n}(K)$ coincides with the polynomial invariant $u(K)(t)\in\Z[t]$ defined by Turaev~\cite{T}.
\end{example}

\subsection{Universal index}

Let $\mathfrak K$ be a diagram category. There can be many (signed) indices on $\mathfrak K$. We are interested in the (signed) index which carries the maximal information about the crossings.

\begin{definition}\label{def:universal_index}
Let $\mathfrak K$ be a diagram category. An index $\iota^u$ with coefficients in a set $I^u$ on the diagram category $\mathfrak K$  is called the \emph{universal index} on $\mathfrak K$ if for any index $\iota$ on $\mathfrak K$ with coefficients in a set $I$ there is a unique map $\psi\colon I^u\to I$ such that $\iota=\psi\circ\iota^u$.

A signed index $\sigma^u$ on the diagram category $\mathfrak K$ with coefficients in a set $S^u$ with an involution $\ast$ is \emph{universal} if for any signed index $\sigma$ on $\mathfrak K$ with coefficients in a set with involution $S$ there is a unique map $\psi\colon S^u\to S$ such that  $\psi(x)^\ast=\psi(x^\ast)$ for all $x\in S^u$ and $\sigma=\psi\circ\sigma^u$.
\end{definition}

\begin{remark}
The universal index for the diagram categories $\mathfrak K(T)$ of tangles and flat tangles in a surface $F$ was described in~\cite{Nct}.
\end{remark}

\begin{definition}\label{def:reduced_index}
A (signed) index $\sigma$ on a diagram category $\mathfrak K$ with coefficients in a set $S$ is called \emph{reduced} if $S=im(\sigma)$ where
\[
im(\sigma)=\{\sigma(v)\mid v\in\V(D),\ D\in\mathfrak K\}.
\]
\end{definition}

\begin{proposition}\label{prop:basic_universal_index}
1. The universal (signed) index on a diagram category is reduced.

2. Let $\sigma^u$ be the universal signed index on a diagram category $\mathfrak K$. Then $\bar{\sigma}^u$ (see Remark~\ref{rem:index_basic}) is the universal index on $\mathfrak K$.

3. If a diagram category $\mathfrak K$ has the sign function (i.e. it is a subcategory of $\mathfrak T_{m,n}(F,B)$ or $\mathfrak{VT}_{m,n}$) and $\iota^u$ is the universal index on $\mathfrak K$ then $\tilde\iota^u$ is the universal signed index on $\mathfrak K$.
\end{proposition}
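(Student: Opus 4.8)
The plan is to verify the three claims separately, using the correspondences between indices and signed indices established in Proposition~\ref{prop:sign_and_index} together with the universal property as a formal manipulation.

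For part 1, the argument is immediate from the universal property itself. Suppose $\iota^u$ is universal with coefficients in $I^u$ but is not reduced, i.e. $\im(\iota^u)\subsetneq I^u$. Consider the restriction $\iota'$ of $\iota^u$ with coefficients in the set $I'=\im(\iota^u)$; this is still an index (the axioms (I0), (I2) only concern values actually taken). By universality there is a unique $\psi\colon I^u\to I'$ with $\iota'=\psi\circ\iota^u$. But both the inclusion $I'\hookrightarrow I^u$ composed with... more directly: apply universality of $\iota^u$ to the index $\iota'$ itself to get $\psi\colon I^u\to I'$; then the composite $j\circ\psi\colon I^u\to I^u$ (where $j\colon I'\hookrightarrow I^u$) satisfies $(j\circ\psi)\circ\iota^u=\iota^u=\id\circ\iota^u$, and since $\iota^u$ is universal for itself the factorization map is unique, so $j\circ\psi=\id_{I^u}$, forcing $j$ to be surjective, i.e. $I'=I^u$. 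The same argument works verbatim for the signed case (the map $\psi$ there must also commute with the involutions, but the restriction to the image inherits the involution so this is automatic).

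For part 2, I would show $\bar\sigma^u=p\circ\sigma^u$ (with $p\colon S^u\to S^u/\ast$) satisfies the universal property among indices. Given any index $\iota$ on $\mathfrak K$ with coefficients in $I$, view $\iota$ as a signed index with the trivial involution (Remark~\ref{rem:index_basic}.2). Universality of $\sigma^u$ gives a unique $\psi\colon S^u\to I$ with $\psi(x^\ast)=\psi(x)^\ast=\psi(x)$ and $\iota=\psi\circ\sigma^u$. The first condition says $\psi$ is constant on $\ast$-orbits, hence factors uniquely as $\psi=\bar\psi\circ p$ for a unique $\bar\psi\colon S^u/\ast\to I$, and then $\iota=\bar\psi\circ p\circ\sigma^u=\bar\psi\circ\bar\sigma^u$. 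Uniqueness of $\bar\psi$: if $\bar\psi'\circ\bar\sigma^u=\iota$ too, then $\bar\psi'\circ p$ is a signed-index factorization of $\iota$ through $\sigma^u$ compatible with the involutions, so $\bar\psi'\circ p=\psi=\bar\psi\circ p$, and surjectivity of $p$ gives $\bar\psi'=\bar\psi$. (One should also note $\bar\sigma^u$ really is an index, which is Remark~\ref{rem:index_basic}.3.)

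For part 3, assume $\mathfrak K$ has the sign function and $\iota^u$ is the universal index with coefficients in $I^u$; I claim $\tilde\iota^u$, with coefficients in $I^u\times\{-1,+1\}$ and involution $(x,\eps)^\ast=(x,-\eps)$, is the universal signed index. It is a signed index by Proposition~\ref{prop:sign_and_index}.3. Given any signed index $\sigma$ on $\mathfrak K$ with coefficients in $(S,\ast)$, form the associated index $\hat\sigma$ with $\hat\sigma(v)=\sigma(v)^{\sgn(v)}$ (Proposition~\ref{prop:sign_and_index}.1), which has coefficients in $S$. Universality of $\iota^u$ yields a unique $\varphi\colon I^u\to S$ with $\hat\sigma=\varphi\circ\iota^u$. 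Define $\psi\colon I^u\times\{-1,+1\}\to S$ by $\psi(x,+1)=\varphi(x)$, $\psi(x,-1)=\varphi(x)^\ast$; then $\psi((x,\eps)^\ast)=\psi(x,-\eps)=\psi(x,\eps)^\ast$ as required, and $\psi(\tilde\iota^u(v))=\psi(\iota^u(v),\sgn(v))=\varphi(\iota^u(v))^{\sgn(v)}=\hat\sigma(v)^{\sgn(v)}=\sigma(v)$, using that $\hat\sigma(v)^{\sgn(v)}=(\sigma(v)^{\sgn(v)})^{\sgn(v)}=\sigma(v)$ since applying the involution twice is the identity. For uniqueness, any $\psi'$ with $\psi'\circ\tilde\iota^u=\sigma$ and compatible with involutions determines, via $\varphi'(x):=\psi'(x,+1)$, a factorization of $\hat\sigma$ through $\iota^u$: indeed $\varphi'(\iota^u(v))=\psi'(\iota^u(v),+1)$ and one recovers $\hat\sigma(v)=\sigma(v)^{\sgn(v)}=\psi'(\tilde\iota^u(v))^{\sgn(v)}=\psi'(\iota^u(v),\sgn(v))^{\sgn(v)}=\psi'(\iota^u(v),+1)=\varphi'(\iota^u(v))$, where the step $\psi'(\iota^u(v),\sgn(v))^{\sgn(v)}=\psi'(\iota^u(v),+1)$ uses the involution-compatibility $\psi'(x,-1)=\psi'(x,+1)^\ast$. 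By uniqueness for $\iota^u$ we get $\varphi'=\varphi$, and since $\psi'(x,-1)=\psi'(x,+1)^\ast=\varphi(x)^\ast=\psi(x,-1)$ we conclude $\psi'=\psi$.

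The only delicate point, and the thing I would be most careful about, is the bookkeeping of involutions in the uniqueness arguments of parts 2 and 3 — one must check that a factorization map which is \emph{a priori} only required to intertwine the involutions does in fact descend to (resp. lift from) the unsigned picture, and that this descent/lift is itself unique. All of this reduces to the surjectivity of the projection $p\colon S^u\to S^u/\ast$ and to $\ast\circ\ast=\id$, so there is no real obstacle; it is a diagram-chase.
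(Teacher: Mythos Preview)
Your proof is correct. Part~1 is essentially identical to the paper's argument. Parts~2 and~3, however, follow a genuinely different route.

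For part~2, the paper assumes both $\iota^u$ and $\sigma^u$ exist, produces maps $\psi\colon I^u\to S^u/\ast$ and $\bar\phi\colon S^u/\ast\to I^u$ from the two universal properties, and argues they are mutually inverse. You instead verify the universal property of $\bar\sigma^u$ directly, using only the universality of $\sigma^u$. For part~3, the paper again assumes $\sigma^u$ exists, obtains the comparison map $\psi\colon S^u\to I^u\times\{-1,+1\}$ from its universal property, checks injectivity, and then declares $\psi$ a bijection. You instead verify the universal property of $\tilde\iota^u$ directly: given a signed index $\sigma$, you pass to the associated index $\hat\sigma$ via Proposition~\ref{prop:sign_and_index}, factor through $\iota^u$, and then rebuild the involution-compatible map by hand.

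Your approach buys two things. First, it is logically cleaner: each part assumes the existence of only one universal object and \emph{constructs} the other, whereas the paper's argument for part~3 presupposes that $\sigma^u$ already exists. Second, your part~3 avoids a step the paper leaves implicit: from injectivity of $\psi\colon S^u\to I^u\times\{-1,+1\}$ the paper concludes bijectivity, which requires knowing that $\tilde\iota^u$ is reduced, i.e.\ that every $\iota^u$-value is realized by crossings of \emph{both} signs --- a fact that is true but not entirely trivial at this point in the paper. Your direct verification sidesteps this. The paper's approach, on the other hand, is shorter once one grants the existence of both universal objects, and makes the relationship $S^u\cong I^u\times\{-1,+1\}$ explicit.
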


\begin{proof}
1. Let $\mathfrak K$ be a diagram category and $\sigma^u$ with coefficients in a set $S^u$ with an involution $\ast$ be the universal signed index on $\mathfrak K$. Let $S'=im(\sigma^u)\subset S$. Then the co-restriction $\sigma'$ of the index $\sigma^u$ to the coefficient set $S'$ is a signed index on $\mathfrak K$. By universality there exists a unique $\ast$-map $\psi\colon S^u\to S'$ such that $\sigma'=\psi\circ\sigma^u$. Then the composition $\psi'$ of maps $S^u\stackrel{\psi}\rightarrow S'\hookrightarrow S^u$ satisfies the condition $\psi'\circ\sigma^u=\sigma^u$. Then by universality, the map $\psi'$ must coincide with the identity map $id_{S^u}$. Thus, $S'=S^u$.

2. Let $\iota^u$ with coefficients in a set $I^u$ be the universal index on $\mathfrak K$. By universality of $\iota^u$ there exists a map $\psi\colon I^u\to S^u/\ast$ such that $\bar\sigma^u=\psi\circ\iota^u$. On the other hand, by universality of $\sigma^u$ there exists a map $\phi\colon S^u\to I^u$ such that $\phi(x^\ast)=\phi(x)^\ast=\phi(x)$ for all $x\in S^u$ and $\iota^u=\phi\circ\sigma^u$. The map $\phi$ induces a well-defined map $\bar\phi\colon S^u/\ast\to I^u$. Then the maps $\psi$ and $\bar\phi$ define an isomorphism of the indices $\iota^u$ and $\bar\sigma^u$.

3. By universality of $\sigma^u$ there is a map $\psi$ from $S^u$ to $I^u\times\{-1,+1\}$. This map is injective. Indeed, if $\psi(x)=\psi(y)$ then take crossings $v$ and $w$ such that $\sigma^u(v)=x$ and $\sigma^u(w)=y$. Since $\iota^u(v)=\iota^u(w)$, either $\sigma^u(v)=\sigma^u(w)$ or $\sigma^u(v)=\sigma^u(w)^\ast$. In the latter case the crossings $v$ and $w$ would have different signs. Thus, $\sigma^u(v)=\sigma^u(w)$. Then $\psi$ is a bijection.
\end{proof}


\section{Traits and indices}\label{sect:traits}

The aim of this section is to prove the central result of the paper that one can weaken the definition of the signed universal index by removing the condition (I2+). Let us give the necessary definition.

\begin{definition}\label{def:trait}
Let $\mathfrak K$ be a diagram category. A \emph{trait with coefficients in a set $\Theta$ on the diagram category $\mathfrak K$} is a map $\theta\colon \bigsqcup_{D\in\mathfrak K}\V(D)\to\Theta$ which satisfies the property (I0): for any elementary morphism $f\colon D\to D'$ in $\mathfrak K$ and any crossings $v\in dom(f_*)$ one has $\theta(v)=\theta(f_*(v))$.
\end{definition}

By definition any index or signed index is a trait.

\subsection{Universal trait}

By analogy with the (signed) index case, one can define the universal trait.

\begin{definition}\label{def:universal_trait}
Let $\mathfrak K$ be a diagram category. An trait $\theta^u$ with coefficients in a set $\Theta^u$ on the diagram category $\mathfrak K$  is called the \emph{universal trait} on $\mathfrak K$ if for any trait $\theta$ on $\mathfrak K$ with coefficients in a set $\Theta$ there is a unique map $\psi\colon \Theta^u\to \Theta$ such that $\theta=\psi\circ\theta^u$.
\end{definition}

It is clear that the universal signed index is a trait. It appears that some kind of inverse statement is true.

\begin{theorem}\label{thm:main_theorem}
Let $\theta^u$ be the universal trait with coefficients in a set $\Theta^u$ on a diagram category $\mathfrak K$. Then $\theta^u$ is the universal signed index on $\mathfrak K$ (with respect to some involution $\ast$ on the set $\Theta^u$).
\end{theorem}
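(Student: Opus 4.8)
The plan is to show that the universal trait $\theta^u$ automatically satisfies the signed-index condition (I2+) for a suitable involution $\ast$ on $\Theta^u$. The key idea is that the involution should be \emph{defined} by the behavior of $\theta^u$ under decreasing second Reidemeister moves, and then one must check that this is well-defined (single-valued), that it is genuinely an involution, and finally that $\theta^u$ with this involution has the universal property among signed indices.

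First I would set up the involution. Given $x \in \Theta^u$, pick a diagram $D$ and a crossing $v \in \V(D)$ with $\theta^u(v) = x$, then apply an increasing second Reidemeister move near $v$ so that $v$ becomes paired with a new crossing $w$ to which a decreasing R2 move applies; set $x^\ast := \theta^u(w)$. The main obstacle will be proving this is well-defined, i.e. independent of all choices ($D$, $v$, the place and manner of the R2 move). The natural tool is a ``doubling'' trick exactly as in the proof of Proposition~\ref{prop:index_r1}: given two configurations producing candidate partners $w_1, w_2$, connect the underlying diagrams by a morphism in $\mathfrak K$, carry a reserved copy of the relevant crossing along, and manipulate by R2 and R3 moves to bring the two partner crossings into a single R2-paired position, forcing $\theta^u(w_1) = \theta^u(w_2)$ via (I0). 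One also needs the ``universality as a constraint'' direction: since $\theta^u$ is universal, every trait factors through it, so any relation we can establish between trait values in \emph{some} trait must be compatible — but here the cleaner route is to note that $\Theta^u = \theta^u(\bigsqcup_D \V(D))$ (a reducedness fact that should be proved just as in Proposition~\ref{prop:basic_universal_index}.1, since the universal trait is reduced), so every element of $\Theta^u$ is realized and the doubling argument applies to it.

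Next, I would verify $\ast$ is an involution: applying the construction twice returns to a crossing R2-paired, after manipulation, with the original $v$, so $(x^\ast)^\ast = x$ by the same doubling/(I0) argument. Then, by construction, $\theta^u$ satisfies (I2+): if $v_1, v_2 \in \V(D)$ admit a decreasing R2 move, then $\theta^u(v_1) = \theta^u(v_2)^\ast$. Hence $\theta^u$ is a signed index with coefficients in $(\Theta^u, \ast)$.

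Finally, universality among signed indices. Let $\sigma$ be any signed index on $\mathfrak K$ with coefficients in $(S, \ast_S)$. Forgetting the involution, $\sigma$ is a trait, so by universality of $\theta^u$ there is a unique map $\psi\colon \Theta^u \to S$ with $\sigma = \psi \circ \theta^u$. It remains to check $\psi(x^\ast) = \psi(x)^{\ast_S}$ for all $x \in \Theta^u$: realize $x = \theta^u(v)$, produce the R2-partner $w$ with $\theta^u(w) = x^\ast$, and apply $\sigma$'s property (I2+) to the pair $\{v, w\}$ to get $\psi(x^\ast) = \sigma(w) = \sigma(v)^{\ast_S} = \psi(x)^{\ast_S}$. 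Uniqueness of $\psi$ is inherited from uniqueness in the universal trait property. This establishes that $\theta^u$ is the universal signed index. I expect the well-definedness of $\ast$ (the doubling argument) to be the only substantive step; the rest is formal diagram chasing.
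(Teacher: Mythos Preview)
Your overall architecture is right, and you correctly identify that the entire content lies in the well-definedness of the involution $\ast$ on $\Theta^u$; once that is established the rest is indeed formal. But your sketch for this crucial step does not work. You invoke ``a `doubling' trick exactly as in the proof of Proposition~\ref{prop:index_r1}'' and propose to ``bring the two partner crossings into a single R2-paired position, forcing $\theta^u(w_1) = \theta^u(w_2)$ via (I0)''. Two problems: first, the argument in Proposition~\ref{prop:index_r1} uses property (I2+) at its decisive step, and (I2+) is precisely what is \emph{not yet} available for the trait $\theta^u$ --- so that template is circular here. Second, bringing $w_1$ and $w_2$ into an R2-paired configuration does not give $\theta^u(w_1) = \theta^u(w_2)$: property (I0) only equates values across morphisms that \emph{preserve} the crossing, and two crossings forming an R2 bigon are not related by such a morphism. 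The same objection applies verbatim to your involution check $(x^\ast)^\ast = x$.

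The paper supplies the missing mechanism, a combinatorial device called \emph{wrapping}. From a based diagram $(D,v)$ one forms $(D,v,n)$ by winding the overcrossing arc at $v$ through $n$ half-turns, and then verifies by explicit pictures that: (i) morphisms in $\mathfrak K_b$ lift to wrapped diagrams with the same wrapping number; (ii) $(D,v,n)$ and $(D,v,n\pm 2)$ are equivalent in $\mathfrak K_b$; and (iii) if $v_1,v_2$ are R2-paired then $(D,v_1,n)$ and $(D,v_2,n\pm 1)$ are equivalent in $\mathfrak K_b$ (the \emph{wrapping swap}). Given a path in $\mathcal G(\mathfrak K)$ with an even number of I2-edges, one follows it with wrapped diagrams, changing the wrapping number by $\pm 1$ at each I2-edge via (iii) and keeping it constant along I0-edges via (i); the final wrapping number is even and unwinds to zero by (ii), producing a genuine $\mathfrak K_b$-equivalence between the endpoints. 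This wrapping construction is the concrete combinatorial idea your proposal is missing.
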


Let us give a simple but implicit and tautological description of the universal trait.

\begin{definition}\label{def:based_diagram_category}
Let $\mathfrak K$ be a diagram category. The \emph{based diagram category $\mathfrak K_b$ associated with $\mathfrak K$} is the category with objects $(D,v)$ where $D\in\mathfrak K$ and $v\in\V(D)$ and morphisms $f\colon (D,v)\to (D',v')$ such that $f\colon D\to D'$ is a morphism in $\mathfrak K$, $v\in dom(f_*)$ and $v'=f_*(v)$.
\end{definition}

Informally, the associated based category is category of diagrams with a distinguished crossing and morphisms (isotopies, isomorphisms and Reidemeister moves) which don't erase the distinguished crossing.

Let $\mathfrak K$ be a diagram category and $\mathfrak K_b$ be the associated based diagram category. Denote the set of objects of the based category by $\V(\mathfrak K)$. These is an equivalence relation on the set $\V(\mathfrak K)$: $(D,v)\sim (D',v')$ iff there is a morphism $f\colon (D,v)\to (D',v')$. Denote the set of equivalence classes by $\overline{\V(\mathfrak K)}$. The following proposition is evident.

\begin{proposition}\label{prop:universal_trait_description}
The map $\theta^u$ such that $\theta^u(v)=[(D,v)]\in\overline{\V(\mathfrak K)}$, $v\in\V(D)$, $D\in\mathfrak K$, is the universal trait on the diagram category $\mathfrak K$.
\end{proposition}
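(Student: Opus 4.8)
The plan is to check the two halves of the defining property of a universal trait directly against the explicit formula $\theta^u(v)=[(D,v)]$: first that $\theta^u$ is a trait at all, and then that every trait factors through it uniquely. Before either step I would record a preliminary observation that legitimizes the quotient $\overline{\V(\mathfrak K)}$, namely that the relation $(D,v)\sim(D',v')$ is genuinely an equivalence relation. Reflexivity comes from identity morphisms and transitivity from composition; symmetry is the only nontrivial point, and it follows because every elementary morphism---an isotopy, an isomorphism, or a Reidemeister move---is invertible, so $\mathfrak K$ (and hence $\mathfrak K_b$) is a groupoid. Concretely, if $f\colon(D,v)\to(D',v')$ is a morphism then $f_*$ is a bijection $\bar\V(D)\to\bar\V(D')$, so $(f^{-1})_*=(f_*)^{-1}$ has $v'=f_*(v)$ in its domain and sends $v'$ back to $v$; thus $f^{-1}$ is a morphism $(D',v')\to(D,v)$.

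That $\theta^u$ is a trait is then immediate. For an elementary morphism $f\colon D\to D'$ and a crossing $v\in dom(f_*)$, the map $f$ is itself a morphism $(D,v)\to(D',f_*(v))$ in $\mathfrak K_b$, so $(D,v)\sim(D',f_*(v))$ and therefore $\theta^u(v)=[(D,v)]=[(D',f_*(v))]=\theta^u(f_*(v))$. This is exactly property (I0).

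For universality, given an arbitrary trait $\theta$ with coefficients in a set $\Theta$, I would define $\psi\colon\overline{\V(\mathfrak K)}\to\Theta$ by $\psi([(D,v)])=\theta(v)$. The content here is well-definedness: if $(D,v)\sim(D',v')$, witnessed by a morphism $f\colon(D,v)\to(D',v')$, then I need $\theta(v)=\theta(v')$. This is where I would invoke that (I0) holds not only for elementary morphisms but for all morphisms of $\mathfrak K$---the argument of Remark~\ref{rem:index_basic}(1), stated there for indices, transcribes verbatim to traits: writing $f$ as a composition of elementary morphisms and noting that $v$ survives each stage, the elementary instances of (I0) chain together to give $\theta(v)=\theta(f_*(v))=\theta(v')$. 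With $\psi$ well-defined the identity $\psi\circ\theta^u=\theta$ holds by construction, and uniqueness is automatic because $\theta^u$ is surjective onto $\overline{\V(\mathfrak K)}$ (each class $[(D,v)]$ equals $\theta^u(v)$), so any two maps agreeing after precomposition with $\theta^u$ must coincide.

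The main obstacle, such as it is, is entirely concentrated in the well-definedness of $\psi$, and through it in the two structural facts above: the invertibility of elementary morphisms (needed for symmetry of $\sim$) and the propagation of (I0) from elementary morphisms to arbitrary compositions. Once these are in hand the verification is purely formal, which is why the proposition can fairly be called evident.
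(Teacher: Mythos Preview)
Your argument is correct and is exactly the routine verification the paper has in mind when it calls the proposition ``evident'' and omits the proof. The two structural points you isolate---invertibility of elementary morphisms making $\sim$ symmetric, and the propagation of (I0) along compositions---are indeed the only places where anything needs to be said.
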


Let us give an analogous interpretation of the universal (signed) index.

\subsection{The graph $\mathcal G(\mathfrak K)$}

Let $\mathfrak K$ be a diagram category and $\mathfrak K_b$ be the associated based diagram category. The set $\V(\mathfrak K)$ can be identified with the set $\bigsqcup_{D\in\mathfrak K}\V(D)$ of crossing of all the diagrams in $\mathfrak K$. Let $\mathcal G(\mathfrak K)$ be the graph with the set of vertex $V(\mathcal G(\mathfrak K))=\V(\mathfrak K)$ and the set of edges $E(\mathcal G(\mathfrak K))$ which consists of edges of two types:
\begin{itemize}
  \item I0-edge: $v f_*(v)$ where $f\colon D\to D'$ is an elementary morphism in $\mathfrak K$ and $v$ is a crossing in $dom(f_*)$;
  \item I2-edge: $v_1v_2$ where $v_1$ and $v_2$ are crossings in a diagram $D\in\mathfrak K$ to which a decreasing second Reidemeister move can be applied.
\end{itemize}
Let $\mathcal G_0(\mathfrak K)$ be the subgraph in $\mathcal G(\mathfrak K)$ obtained by removing all  I2-edges from $\mathcal G(\mathfrak K)$.

The proposition below follows from the definitions.
\begin{proposition}\label{prop:universal_index_graph}
Let $\mathfrak K$ be a diagram category.
\begin{enumerate}
  \item The universal index coefficient set $I^u$ is identified with the set of connected components of the graph $\mathcal G(\mathfrak K)$. The universal index $\iota^u$ maps a crossing to the correspondent connected component.
  \item The universal trait coefficient set $\Theta^u$ is identified with the set of connected components of the graph $\mathcal G_0(\mathfrak K)$. The universal index $\theta^u$ maps a crossing to the correspondent connected component.
  \item Let $v\in\V(D)$ and $v'\in\V(D')$, $D,D'\in\mathfrak K$, be two crossing. Then $\sigma^u(v)=\sigma^u(v')$ if and only if $v$ and $w$ can be connected in $\mathcal G(\mathfrak K)$ by a path which contains an even number of I2-edges, and $\sigma^u(v)^\ast=\sigma^u(v')$ if and only if $v$ and $w$ can be connected by a path which contains an odd number of I2-edges.
\end{enumerate}
\end{proposition}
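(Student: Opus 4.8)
The plan is to prove the three assertions directly from the descriptions of the universal index, the universal trait and the universal signed index in terms of the graphs $\mathcal G(\mathfrak K)$ and $\mathcal G_0(\mathfrak K)$, essentially by unwinding definitions; the real content is checking that the combinatorial data assembled from these graphs actually satisfies the universal properties from Definitions~\ref{def:universal_index}, \ref{def:universal_trait} and~\ref{def:universal_index}. Throughout, recall from Proposition~\ref{prop:universal_trait_description} that the universal trait is $v\mapsto[(D,v)]$, and that two based objects are equivalent exactly when joined by a morphism, i.e. by a path of I0-edges in $\mathcal G_0(\mathfrak K)$.

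For part (2): I would first observe that a path of I0-edges from $v$ to $v'$ is precisely a chain of elementary morphisms (and their formal inverses) carrying $v$ to $v'$, so connected components of $\mathcal G_0(\mathfrak K)$ coincide with the equivalence classes $\overline{\V(\mathfrak K)}$ of Proposition~\ref{prop:universal_trait_description}; hence $\Theta^u$ is identified with $\pi_0(\mathcal G_0(\mathfrak K))$ and $\theta^u$ sends a crossing to its component. One subtlety to address is that an I0-edge is defined using a single elementary morphism $f$ with $v\in dom(f_*)$, whereas a general morphism of $\mathfrak K$ is a formal composition; but composing elementary morphisms and restricting to surviving crossings at each stage gives exactly a walk in $\mathcal G_0(\mathfrak K)$, and conversely, so the two notions of ``connected by a morphism'' agree. (Note the statement's typo ``universal index $\theta^u$'' should read ``universal trait''.)

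For part (1): by definition any index $\iota$ must be constant along every I0-edge (property (I0)) and along every I2-edge (property (I2)), so $\iota$ factors through $\pi_0(\mathcal G(\mathfrak K))$; conversely the function sending a crossing to its $\mathcal G(\mathfrak K)$-component is an index, since constancy along I0- and I2-edges is exactly (I0) and (I2). The factorization through $\pi_0(\mathcal G(\mathfrak K))$ is moreover unique because $\iota^u$ is surjective onto $\pi_0(\mathcal G(\mathfrak K))$ (every component contains some crossing), so $I^u=\pi_0(\mathcal G(\mathfrak K))$ with $\iota^u$ the component map satisfies the universal property of Definition~\ref{def:universal_index}.

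For part (3): here I would use Proposition~\ref{prop:basic_universal_index}(1) (the universal signed index is reduced) together with the explicit construction of $\sigma^u$. Define on $\pi_0(\mathcal G(\mathfrak K))$ the relation: crossings $v,v'$ get the ``same'' signed value iff they are joined in $\mathcal G(\mathfrak K)$ by a path with an even number of I2-edges, and ``$\ast$-related'' values iff joined by a path with an odd number of I2-edges. The main obstacle is \emph{well-definedness}: one must show that within a single connected component of $\mathcal G(\mathfrak K)$ the parity of the number of I2-edges in a path between two fixed vertices is the only ambiguity that matters — equivalently, that one cannot consistently refine further. This is precisely where the hypothesis of Theorem~\ref{thm:main_theorem} (that the universal trait equals the universal signed index) is not yet available, so part (3) of this proposition must stand on its own: it should be read as the \emph{definition} of the partition and involution underlying $\sigma^u$, and the verification is that (a) the resulting pair (set with involution, labelling) satisfies (I0) and (I2+) — immediate, since crossing an I2-edge flips the class by construction and I0-edges preserve it — and (b) it is universal among signed indices, because any signed index $\sigma$ is constant along I0-edges and flips under $\ast$ along I2-edges, hence is constant on each even-I2-parity class and $\ast$-swaps the two classes of a component, so it factors uniquely through $\sigma^u$. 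I expect step (b)'s uniqueness clause to require the same surjectivity remark as in part (1). No genuinely hard estimate arises; the whole proposition is a bookkeeping exercise, with the only delicate point being to phrase part (3) as a construction of $\sigma^u$ rather than a claim about a pre-existing object.
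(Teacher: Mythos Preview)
Your proposal is correct and matches the paper's approach: the paper itself gives no proof beyond the sentence ``The proposition below follows from the definitions,'' and what you have written is exactly the routine unwinding of those definitions that the paper leaves implicit. Your treatment of part~(3) is in fact more careful than the paper's, correctly framing the even-I2-parity classes as the \emph{construction} of $S^u$ and then verifying the universal property; the only cosmetic improvement would be to state explicitly that when a component of $\mathcal G(\mathfrak K)$ contains an odd I2-cycle the two parity classes coincide and the corresponding element of $S^u$ is a fixed point of~$\ast$, so that both ``if and only if'' clauses can hold simultaneously without contradiction.
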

Note that two based diagrams $(D,v)$ and $(D',v')$ are equivalent in $\mathfrak K_b$ iff the crossings $v$ and $v'$ belong to one connected component of the graph $\mathcal G_0(\mathfrak K)$.

Thus, if  we want to show that the universal trait $\theta^u$ coincides with the universal signed index $\sigma^u$ then we have to prove that any path in $\mathcal G(\mathfrak K)$ which contains an even number of I2-edges, can be replaced with a path in $\mathcal G_0(\mathfrak K)$ with the same ends. For this, we use a combinatorial trick called \emph{wrapping swap}.

\subsection{Wrapping}

\begin{definition}\label{def:crossing_wrapping}
Let $\mathfrak K$ be a diagram category and $\mathfrak K_b$ be the associated based diagram category. Let $(D,v)$ be a based diagram in $\mathfrak K_b$ and $n\in\Z$. Define the \emph{wrapped based diagram} $(D,v,n)$ as the diagram $D'$ obtained from $D$ by the rotating the overcrossing of $v$ counterclockwise by $n$ half-turn as shown in Fig.~\ref{pic:wrappings}. The central crossing $v'$ of the wrapping in $D'$ is the distinguished crossing of the based diagram $(D,v,n)=(D',v')$.

\begin{figure}[h]
\centering\includegraphics[width=0.5\textwidth]{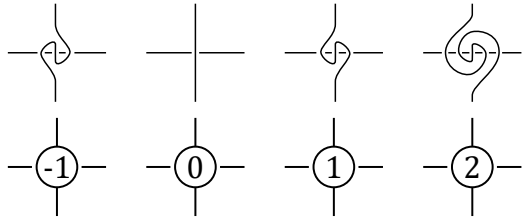}
\caption{Wrappings of a crossing}\label{pic:wrappings}
\end{figure}

We denote the wrapped crossing by a disk with the wrapping number $n$ inside.
\end{definition}

Note that the original diagram $D$ and the wrapped diagram $D'$ are equivalent in $\mathfrak K$.

\begin{proposition}\label{prop:wrapping_properties}
Let $\mathfrak K$ be a diagram category and $\mathfrak K_b$ be the associated based diagram category.
\begin{enumerate}
  \item Let $f\colon (D,v)\to(D',v')$ is a morphism in $\mathfrak K_b$. Then for any $n\in\Z$ there is a morphism $f_n\colon (D,v,n)\to(D',v',n)$ in $\mathfrak K_b$ of the wrapped diagrams.
  \item Let $(D,v)\in\mathfrak K_b$ be a based diagram. Then for any $n\in\Z$ the wrapped diagrams $(D,v,n)$ and $(D,v,n\pm2)$ are equivalent in $\mathfrak K_b$.
  \item(wrapping swap) Let $D\in\mathfrak K$ be a diagram and $v_1,v_2\in\V(D)$ be two crossings to which a second Reidemeister move can be applied. Then for any $n\in\Z$ the wrapped diagrams $(D,v_1,n)$ and $(D,v_2,n\pm 1)$ are equivalent in $\mathfrak K_b$ (Fig.~\ref{pic:wrapping_swap}).
\end{enumerate}
\begin{figure}[ht]
\centering\includegraphics[width=0.5\textwidth]{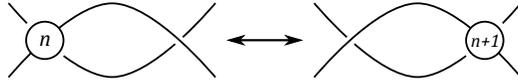}
\caption{Wrapping swap}\label{pic:wrapping_swap}
\end{figure}
\end{proposition}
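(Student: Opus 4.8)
The plan is to verify each of the three claims by an explicit local picture, since all three assertions are statements about diagrams in the based category $\mathfrak K_b$ that can be checked by producing the required sequence of elementary morphisms. For part~(1), I would observe that a wrapping is a purely local modification in a disk neighborhood of the crossing $v$: it replaces the four arcs near $v$ by the same crossing together with $n$ half-twists of the over-strand wound around it. Given a morphism $f\colon(D,v)\to(D',v')$, every elementary morphism in the composition that constitutes $f$ either takes place away from a small neighborhood of the distinguished crossing (in which case it commutes verbatim with the wrapping), or is a Reidemeister move involving $v$ — and since $v\in dom(f_*)$, the move is an $R2$ or $R3$ move (not an $R1$ that destroys $v$). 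In each such case one can first carry the $n$ half-twists along for the ride, performing the ``same'' move on the wrapped picture; this is the content of Fig.~\ref{pic:wrappings} extended across a Reidemeister move. Concatenating these local modifications gives the desired $f_n\colon(D,v,n)\to(D',v',n)$.

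For part~(2), the claim $(D,v,n)\sim(D,v,n\pm2)$ in $\mathfrak K_b$ is exactly a first Reidemeister move applied to the loop formed by two consecutive half-twists of the over-strand: two half-turns of the over-crossing around $v$ produce a small kink that can be removed (or created) by an $R1$ move, and this $R1$ move does not touch the central crossing $v$ itself, so it is a morphism in the based category. I would draw the two-half-twist configuration and point to the $R1$ cancellation, noting that the sign/type of the kink is irrelevant because $R1$ of either type is an elementary morphism.

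For part~(3), the wrapping swap, I would set up the local picture where $v_1$ and $v_2$ form a bigon available for a decreasing $R2$ move, take the diagram $(D,v_1,n)$ in which the over-strand of $v_1$ has been wound $n$ half-turns, and slide one of these half-turns across the bigon onto $v_2$: performing the $R2$ move (which is an elementary morphism) drags a half-twist of the over-strand from around $v_1$ to around $v_2$ — or adds/removes one half-twist in the process because the two strands of the bigon have opposite co-orientation — yielding $(D,v_2,n\pm1)$, with the new distinguished crossing being $v_2$. This is precisely Fig.~\ref{pic:wrapping_swap}, and the $\pm1$ shift (rather than an even shift) is the whole point: an odd change in wrapping number is traded for crossing the bigon. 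I expect this last part to be the main obstacle, not because the move is hard to see but because one must be careful that the sequence used — an $R2$ to open the bigon, a detour of the half-twist, an $R2$ to close it, possibly with an intermediate $R3$ — stays in $\mathfrak K_b$ at every step (never erasing the crossing that is to be distinguished) and that the bookkeeping of the half-turn count is consistent with the counterclockwise convention fixed in Definition~\ref{def:crossing_wrapping}; getting the sign of the $\pm1$ right in all four loop-type cases is the delicate point.
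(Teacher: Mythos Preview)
Your plan for part~(1) is essentially the paper's argument, with one small slip: if an elementary morphism involves the distinguished crossing $v$ and keeps $v$ in $dom(f_*)$, that morphism can only be a third Reidemeister move, not an $R2$ (an $R2$ touching $v$ would erase it). The paper then just exhibits the analogue of $R3$ with a wrapped crossing.

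Your plan for part~(2) has a genuine gap. Two half-turns of the over-strand around $v$ do \emph{not} produce an isolated kink removable by a single $R1$: the under-strand of $v$ passes through the region where the over-strand winds, so the extra crossings created by the wrapping are crossings between the over-strand and the under-strand, not self-crossings of the over-strand. (A quick count confirms this: from part~(3) the paper shows that one half-turn of wrapping differs from none by a single $R2$, i.e.\ by two crossings; hence two half-turns add four over/under crossings, and no collection of $R1$ moves can remove those.) The paper's actual reduction $(D,v,n)\sim(D,v,n-2)$ is not an $R1$ at all: one takes the two extra arcs coming from the additional full turn and slides one of them over the wrapped overcrossing and the other under it, a sequence of $R2$/$R3$ moves that never touches the central crossing $v$. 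This is the step you need to rethink.

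For part~(3) you have the right intuition but are overcomplicating it. The paper's wrapping swap is literally a single $R2$ move: in the local picture of the bigon with $v_1$ wrapped $n$ times, performing (or undoing) one $R2$ on the two outermost arcs already \emph{is} the picture of $v_2$ wrapped $n\pm 1$ times, with the distinguished crossing now read as $v_2$. No opening/closing of the bigon and no intermediate $R3$ is needed, and there is no delicate sign bookkeeping---the single $R2$ accounts for the $\pm 1$ shift directly.
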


\begin{proof}
  1. It is enough to consider an elementary morphism $f$. If $f$ is not a Reidemeister move or the crossing $v$ does not take part in the move $f$ then the morphism $f$ induces an elementary morphism $f_n\colon (D,v,n)\to(D',v',n)$. If $v$ participates in $f$ then $f$ is a third Reidemeister move. In this case we can perform the analogous move on the wrapped diagrams (see Fig.~\ref{pic:wrapping_r3})
\begin{figure}[ht]
\centering\includegraphics[width=0.5\textwidth]{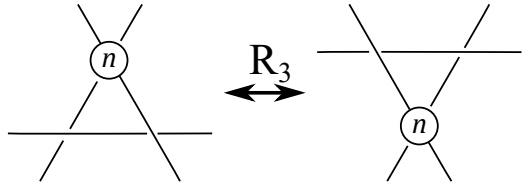}
\caption{Third Reidemeister move with a wrapped crossing}\label{pic:wrapping_r3}
\end{figure}

2.  The proof for the second statement is given in Fig.~\ref{pic:wrapping_reduction}. We move one arc below the wrapped overcrossing, and the other move above the wrapped overcrossing.
\begin{figure}[ht]
\centering\includegraphics[width=0.35\textwidth]{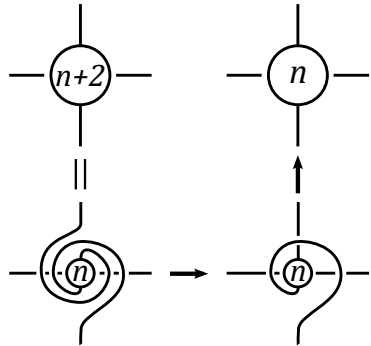}
\caption{Wrapping reduction}\label{pic:wrapping_reduction}
\end{figure}

3. The proof for the wrapping swap is given in Fig.~\ref{pic:wrapping_swap_proof}. It is realized by one second Reidemeister move.
\begin{figure}[ht]
\centering\includegraphics[width=0.9\textwidth]{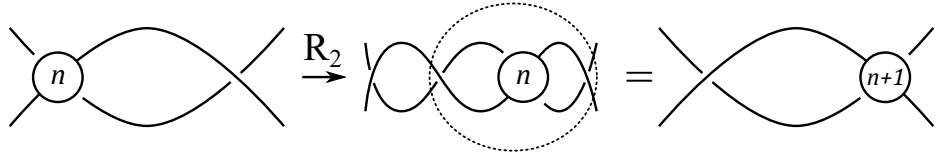}
\caption{Proof for the wrapping swap}\label{pic:wrapping_swap_proof}
\end{figure}
\end{proof}

\begin{remark}\label{rem:wrapping_free_knots}
If $\mathfrak K$ is a diagram category of free tangles then due to virtualization there can be different types of wrapping (see Fig.~\ref{pic:wrapping_free}).
\begin{figure}[ht]
\centering\includegraphics[width=0.4\textwidth]{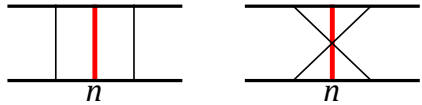}
\caption{Wrappings with wrapping number $n+1$. The central chord has wrapping of order $n$}\label{pic:wrapping_free}
\end{figure}
In this case we start proof with the second statement of Proposition~\ref{prop:wrapping_properties}. There two types of wrapping of order 2 (up to orientation of tangle components), see Fig.~\ref{pic:wrapping_free_order2}. In the first case we can reduce the wrapping to the single chord $v$ with two second Reidemeister moves, the second case corresponds to the configuration in Fig.~\ref{pic:wrapping_reduction} (with $n=0$).
\begin{figure}[ht]
\centering\includegraphics[width=0.4\textwidth]{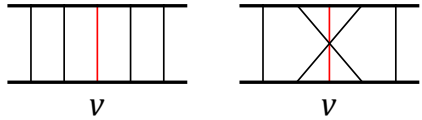}
\caption{Wrappings of order $2$}\label{pic:wrapping_free_order2}
\end{figure}
Thus, we can consider only wrappings of order $0$ or $1$. Then the other statements in Proposition~\ref{prop:wrapping_properties} are proved by considering possible configurations.
\end{remark}

Now we are ready to prove the main theorem.

\subsection{Proof of the main theorem}

By universality of the trait $\theta^u$ there is a map $\psi\colon \Theta^u\to S^u$ such that $\sigma^u=\psi\circ\theta^u$. We will prove that $\psi$ is an injection. Since the universal index and trait are reduced, this would imply that $\psi$ is a bijection and $\sigma^u$ and $\theta^u$ coincide.

Let $v\in\V(D)$ and $v'\in\V(D')$, $D,D'\in\mathfrak K$, be two crossing such that $\sigma^u(v)=\sigma^u(v')$. By Proposition~\ref{prop:universal_index_graph} that means that there exists a path $\gamma=v_1v_2\dots v_l$ in $\mathcal G(\mathfrak K)$ such that $v_1=v$, $v_l=v'$ and $\gamma$ contains even number of I2-edges. Let $D=D_1, D_2,\dots, D_l=D'$ be the diagrams the crossings $v_1,\dots, v_l$ belong to. We construct a correspondent sequence of wrapped based diagrams $(D_i,v_i,n_i)$ using induction.

We start with the diagram $(D,v,0)=(D,v)$. Assume we have defined the wrapped diagram $(D_i,v_i,n_i)$, $1\le i<l$. If the edge $v_iv_{i+1}$ is an I0-edge then there is an elementary morphism $f\colon D_i\to D_{i+1}$ such that $f_*(v_i)=v_{i+1}$. Then we take $n_{i+1}=n_i$, so there is a morphism $f_n\colon (D_i,v_i,n_i)\to (D_{i+1},v_{i+1},n_{i+1})$ in the based diagram category $\mathfrak K_b$.

If the edge $v_iv_{i+1}$ has type I2 then we set $n_{i+1}=n_i\pm 1$ and consider the wrapping swap $f_{ws}\colon (D_i,v_i, n_i)\to (D_{i+1},v_{i+1},n_{i+1})$.

Thus, we get a sequence of based diagrams $(D_i,v_i,n_i)$ and morphisms between them in the based diagram category $\mathfrak K_b$. That is all diagrams $(D_i,v_i,n_i)$, $i=1,\dots,l$ are equivalent in $\mathfrak K_b$. The sequence finishes with the diagram $(D_l,v_l,n_l)=(D',v',n_l)$. Since only I2-edges change the wrapping number and the number of such edges in $\gamma$ is even, the number $n_l$ is even. By Proposition~\ref{prop:wrapping_properties} the diagram $(D',v',n_l)$ is equivalent to the based diagram $(D',v',0)=(D',v')$.

Thus, the based diagrams $(D,v)$ and $(D',v')$ are equivalent in $\mathfrak K_b$, so by Proposition~\ref{prop:universal_trait_description}, $\theta^u(v)=\theta^u(v')$. Theorem~\ref{thm:main_theorem} is proved.

\begin{remark}\label{rem:universal_trait_involution}
Since the universal trait and the universal signed index coincide, there is an involution on the universal trait coefficient set $\Theta^u$. In terms of Proposition~\ref{prop:universal_trait_description} we can define it as follows: for any $[(D,v)]\in\overline{\V(\mathfrak K)}$ its dual element is the wrapped diagram $[D,v]^\ast=[(D,v,1)]$.
\end{remark}

\subsection{Corollaries}

Let us formulate several consequences of Theorem~\ref{thm:main_theorem}.

\begin{corollary}\label{cor:universal_signed_index_to_trait}
Let $\mathfrak K$ be a diagram category and $\theta$ be a trait on $\mathfrak K$ with coefficients in a set $\Theta$. Then there is a unique map $\psi\colon S^u\to \Theta$ from the coefficient set $S^u$ of the universal signed index $\sigma^u$ on $\mathfrak K$ such that $\theta=\phi\circ\sigma^u$.
\end{corollary}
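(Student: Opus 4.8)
The plan is to deduce this directly from Theorem~\ref{thm:main_theorem}, so the proof is essentially a translation of the universal property of the trait through the identification established there. First I would recall that, by Proposition~\ref{prop:universal_trait_description}, the universal trait $\theta^u$ on $\mathfrak K$ exists, with coefficient set $\Theta^u=\overline{\V(\mathfrak K)}$, and that by Definition~\ref{def:universal_trait} every trait $\theta$ on $\mathfrak K$ with coefficients in a set $\Theta$ factors \emph{uniquely} as $\theta=\psi\circ\theta^u$ for some $\psi\colon\Theta^u\to\Theta$. This is the only ``hard'' input about traits, and it is already available.

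Next I would invoke Theorem~\ref{thm:main_theorem}: the universal trait $\theta^u$ is the universal signed index, i.e.\ the coefficient sets $\Theta^u$ and $S^u$ are canonically identified (the identification being the bijection $\psi$ from the proof of that theorem, which is the identity on connected components of $\mathcal G_0(\mathfrak K)$) and under this identification $\theta^u=\sigma^u$. Substituting this identification into the factorization $\theta=\psi\circ\theta^u$ produces a map $\psi\colon S^u\to\Theta$ with $\theta=\psi\circ\sigma^u$, which is exactly the assertion (the $\phi$ in the displayed formula of the statement being the same map $\psi$).

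For uniqueness I would argue that $\sigma^u$ is reduced by Proposition~\ref{prop:basic_universal_index}(1), hence $\im(\sigma^u)=S^u$; so if $\psi_1\circ\sigma^u=\psi_2\circ\sigma^u=\theta$ then $\psi_1$ and $\psi_2$ agree on all of $S^u$, forcing $\psi_1=\psi_2$. (Alternatively, uniqueness is already part of the universal property of $\theta^u$ quoted in the first step, transported along the identification $\Theta^u\cong S^u$.)

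I do not expect a genuine obstacle: all the substance sits in Theorem~\ref{thm:main_theorem}, and this corollary merely records that, once the universal trait has been recognized as the universal signed index, the defining factorization property of the trait becomes a factorization property of $\sigma^u$ that holds for arbitrary traits $\theta$ (not only for signed indices). The one point to state carefully is that the factoring map is genuinely defined on $S^u$ and not merely on some quotient of it, which is immediate because the identification $\Theta^u\cong S^u$ is a bijection.
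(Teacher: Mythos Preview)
Your proposal is correct and matches the paper's approach: the paper states this as an immediate corollary of Theorem~\ref{thm:main_theorem} without giving a separate proof, and your argument is precisely the natural unpacking of that implication via the universal property of $\theta^u$ and the identification $\theta^u=\sigma^u$. Your handling of uniqueness (either via reducedness of $\sigma^u$ or by transporting uniqueness from the universal trait) is fine, and you correctly note the harmless $\psi/\phi$ typo in the displayed formula.
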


\begin{remark}
Not all traits are signed indices because the map $\psi$ can be incompatible with the involution.

For example, let $\mathfrak K=\mathfrak{FK}$ be the theory of flat knots, $n$ be the Gaussian signed index and a trait $\theta$ with coefficients in $\Z_2$ be defined by the formula
\[
\theta(v)=\left\{\begin{array}{cl}
                   1, & n(v)\in\{-1,1,2\}, \\
                   0, & \mbox{otherwise}.
                 \end{array}\right.
\]
The trait $\theta$ is neither an index nor a signed index.
\end{remark}

\begin{theorem}\label{thm:universal_index_description}
Let $\mathfrak K$ be a diagram category. Let $D\in\mathfrak K$ and $v\in\V(D)$ be a crossing of the diagram $D$.

\begin{enumerate}
  \item The based diagram $(D,v)$ considered modulo morphisms of $\mathfrak K$ which do not eliminate chosen crossing $v$ is the universal signed index $\sigma^u(v)$ of the crossing $v$.
  \item The based diagram $(D,v)$ considered modulo morphisms of $\mathfrak K$ which do not eliminate chosen crossing $v$, and wrappings of the chosen crossing, is the universal index $\iota^u(v)$ of the crossing $v$.
\end{enumerate}
\end{theorem}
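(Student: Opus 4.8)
The plan is to derive both parts by combining the tautological description of the universal trait in Proposition~\ref{prop:universal_trait_description}, the identification of the universal trait with the universal signed index in Theorem~\ref{thm:main_theorem}, and the relation $\iota^u=\bar\sigma^u$ from Proposition~\ref{prop:basic_universal_index}(2). For the first statement there is essentially nothing new to do: Proposition~\ref{prop:universal_trait_description} says that the map $v\mapsto [(D,v)]\in\overline{\V(\mathfrak K)}$, i.e.\ the class of the based diagram $(D,v)$ modulo morphisms of $\mathfrak K$ that do not erase $v$, is the universal trait $\theta^u$; and Theorem~\ref{thm:main_theorem} says $\theta^u$ is the universal signed index $\sigma^u$ with respect to the involution of Remark~\ref{rem:universal_trait_involution}, namely $[(D,v)]^\ast=[(D,v,1)]$. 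Hence $[(D,v)]=\sigma^u(v)$, which is exactly assertion~1.

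For the second statement, recall from Proposition~\ref{prop:basic_universal_index}(2) that $\iota^u=\bar\sigma^u=p\circ\sigma^u$ where $p\colon S^u\to S^u/\ast$ is the quotient by the involution. Thus assertion~2 amounts to showing that two crossings $v\in\V(D)$ and $v'\in\V(D')$ satisfy $\iota^u(v)=\iota^u(v')$ if and only if the based diagrams $(D,v)$ and $(D',v')$ are identified in the quotient of $\V(\mathfrak K)$ by the two generating moves: morphisms of $\mathfrak K$ not erasing the distinguished crossing, and wrappings $(D,v)\mapsto(D,v,n)$ of the distinguished crossing. For the ``only if'' direction I would reuse verbatim the wrapping-swap construction from the proof of Theorem~\ref{thm:main_theorem}: if $\iota^u(v)=\iota^u(v')$, then by Proposition~\ref{prop:universal_index_graph}(1) there is a path $\gamma=v_1\dots v_l$ in $\mathcal G(\mathfrak K)$ with $v_1=v$, $v_l=v'$, and the inductive assignment of wrapping numbers (constant across I0-edges by Proposition~\ref{prop:wrapping_properties}(1), changed by $\pm1$ across I2-edges by Proposition~\ref{prop:wrapping_properties}(3)) yields a chain of $\mathfrak K_b$-morphisms from $(D,v,0)=(D,v)$ to $(D',v',n_l)$ for some $n_l\in\Z$. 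Since $(D',v',n_l)$ is obtained from $(D',v')$ by a wrapping, $(D,v)$ and $(D',v')$ are identified in the quotient.

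For the ``if'' direction it suffices to check that $\iota^u$ is invariant under the two generating moves. A morphism preserves $\iota^u$ because $\iota^u$ is in particular a trait, hence constant along I0-edges. A wrapping $(D,v)\mapsto(D,v,n)$ preserves $\iota^u$ because, by Remark~\ref{rem:universal_trait_involution} together with Proposition~\ref{prop:wrapping_properties}(2), the central crossing of $(D,v,n)$ has $\sigma^u$-value $\sigma^u(v)$ when $n$ is even and $\sigma^u(v)^\ast$ when $n$ is odd, and $p(\sigma^u(v))=p(\sigma^u(v)^\ast)=\iota^u(v)$ in either case. Combining the two directions, $\iota^u$ descends to a bijection from the quotient set onto $\im(\iota^u)=I^u$, which proves assertion~2.

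The step that I expect to require the most care is the precise claim underlying Remark~\ref{rem:universal_trait_involution}: that a single wrapping of the distinguished crossing moves its $\sigma^u$-value to the dual value, and no further, which is what makes ``wrapping'' correspond exactly to ``identifying $x$ with $x^\ast$''. This rests on the wrapping-swap Proposition~\ref{prop:wrapping_properties}(3) (used to show a wrapping by $1$ is reachable by an odd-$\mathrm{I}2$ path, hence realizes the involution) and on Proposition~\ref{prop:wrapping_properties}(2) (a wrapping by $2$ is trivial in $\mathfrak K_b$), exactly as in the proof of the main theorem; once this is fixed, the rest is bookkeeping with the graph $\mathcal G(\mathfrak K)$ and the parity of the wrapping number. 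In the case of free tangles one should additionally invoke Remark~\ref{rem:wrapping_free_knots} to reduce to wrappings of order $0$ and $1$ before running the argument.
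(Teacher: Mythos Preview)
Your proposal is correct and follows precisely the route the paper intends: the theorem is stated in the ``Corollaries'' subsection without an explicit proof, and your derivation from Proposition~\ref{prop:universal_trait_description}, Theorem~\ref{thm:main_theorem}, Remark~\ref{rem:universal_trait_involution}, and Proposition~\ref{prop:basic_universal_index}(2) is exactly the intended unpacking. The only thing worth trimming is your caveat that a single wrapping moves $\sigma^u$ to the dual ``and no further'': you do not need this strict form, since Remark~\ref{rem:universal_trait_involution} \emph{defines} the involution by $[(D,v)]^\ast=[(D,v,1)]$, and it is harmless if that involution happens to fix some classes.
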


\begin{remark}\label{rem:universal_index_flat_knots}
For the theory of flat knots $\mathfrak{FK}$, the universal index of a crossing $v$ in a flat knot diagram $K$ coincides with the equivalence class $[\tilde K^v_{glue}]$ of a singular flat knot with one singular point introduced by A. Henrich~\cite{Henrich}. The universal signed index of a crossing $v$ in a flat knot diagram $K$ coincides with homotopy class $[K_v^+]$ of signed singular strings introduced by P. Cahn~\cite{C}.
\end{remark}

\section{Indistinguishability principle}\label{sect:principles}

Now, let us combine the main theorem with the description of the universal index of tangles in a fixed surface obtained in~\cite{Nct}.

Let $D=D_1\cup\cdots\cup D_n$ be a tangle diagram in a surface $F$. Let $v\in\V(D)$ be a crossing of the diagram $D$.

The \emph{component type} $\tau(v)$ of the crossing $v$ is the pair $(i,j)$ where $D_i$ is the overcrossing component at $v$ and $D_j$ is the undercrossing component at $v$ (Fig.~\ref{pic:crossing_type}).

\begin{figure}[h]
\centering\includegraphics[width=0.1\textwidth]{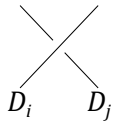}
\caption{A crossing of type $(i,j)$}\label{pic:crossing_type}
\end{figure}

Let $v$ be a self-crossing of a long component $D_i$ and $s_i$ be the starting point of $D_i$. Define the \emph{order type} $o$ of the crossing $v$ to be $o(v)=-1$ if $v$ is an \emph{early undercrossing}, i.e. one encounters first the undercrossing of $v$ while moving along $D_i$ from the point $s_i$, and $o(v)=+1$ if $v$ is an \emph{early overcrossing}, i.e. one encounters the overcrossing of $v$ first.

The following statements were proved in~\cite{Nct}.
\begin{theorem}\label{thm:universal_classical_index}
1. There is no nontrivial indices in the theory of classical knots $\mathfrak{CK}$.

2. The order type is the universal index in the theory of long classical knots  $\mathfrak{LCK}$.

3. The component type $\tau$ is the universal index in the theory of classical links $\mathfrak{CL}_n$ with $n$ (enumerated) components.
\end{theorem}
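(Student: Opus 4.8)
The plan is to read all three statements off the concrete description of the universal (signed) index given by Theorems~\ref{thm:main_theorem} and~\ref{thm:universal_index_description}: the universal signed index of a crossing $v\in\V(D)$ is the class of the based diagram $(D,v)$ modulo morphisms of $\mathfrak K$ that do not erase $v$, and the universal index is the same class allowing in addition wrappings of $v$; equivalently, by Proposition~\ref{prop:universal_index_graph}, the universal index coefficient set is the set of connected components of the graph $\mathcal G(\mathfrak K)$. So for each of $\mathfrak{CK}$, $\mathfrak{LCK}$, $\mathfrak{CL}_n$ the task splits into (i) bringing an arbitrary based classical diagram into a \emph{normal form}, and (ii) showing that two normal forms are equivalent in the based category (up to a wrapping, which realizes the involution $\ast$ and is invisible to the index) exactly when the relevant discrete datum agrees --- no datum for $\mathfrak{CK}$, the order type $o(v)$ for $\mathfrak{LCK}$, the component type $\tau(v)$ for $\mathfrak{CL}_n$.

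The heart of step (i) is an \emph{isolation} argument. Given a crossing $v$ of a classical diagram $D$ on $S^2$, perform the oriented smoothing at $v$; on the sphere the resulting curve has a smoothing loop bounding an innermost disc, and because $S^2$ carries no nontrivial topology the arcs of $D$ piercing that disc can be swept out of it by Reidemeister moves not involving $v$. Contracting the loop then turns $v$ into a \emph{local} crossing --- an ordinary loop crossing (Fig.~\ref{pic:loop_types}) when $v$ is a self-crossing, and an elementary clasp between two small otherwise unknotted arcs when $v$ joins two distinct components. At this stage Proposition~\ref{prop:index_r1} already pins down the (signed) index of all loop crossings lying on a given component, up to $\ast$, and an analogous bookkeeping covers an elementary clasp; the normal form of $(D,v)$ is thus determined by the local picture together with whatever ambient data the sphere cannot absorb.

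Step (ii) is a matching argument on $S^2$. For $\mathfrak{CK}$ there is no basepoint and no second component: a loop crossing can be slid anywhere on the sphere, and sliding it around the sphere interchanges its two types $l\leftrightarrow r$ at fixed sign, so Proposition~\ref{prop:index_r1} collapses all loop crossings of both signs onto a single index value; hence the universal index of $\mathfrak{CK}$ is a one-point set (no nontrivial index) and, by Proposition~\ref{prop:basic_universal_index}(3), the universal signed index is merely the sign. For $\mathfrak{LCK}$ the long component has a starting point $s_i$ past which a loop crossing cannot be slid, so the only surviving invariant of a normalized $v$ is whether the over- or the under-branch is met first, i.e. $o(v)\in\{+1,-1\}$; after checking directly that $o$ satisfies (I0) and (I2) and that crossings with equal order type can be joined, $o$ is the universal index. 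For $\mathfrak{CL}_n$ the isolation produces an elementary clasp between small arcs of the numbered components $D_i$ (over) and $D_j$ (under); carrying this clasp, parked inside a small disc, along a Reidemeister sequence to a fixed reference diagram of the link (the trick used in the proof of Proposition~\ref{prop:index_r1}) and then sliding it on $S^2$ shows that any two crossings of the same component type are $\mathcal G$-equivalent, so $\tau$ is the universal index. In each case Proposition~\ref{prop:basic_universal_index}(3) upgrades the universal index to the universal signed index by also recording the sign.

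The main obstacle is the isolation step together with the matching isotopies: one must make rigorous that on $S^2$ a smoothing loop can be shrunk to a loop crossing or a clasp by Reidemeister moves preserving $v$, and that the rest of the diagram can be kept out of the way while one normal form is deformed into another (and, for links, parked during a reference-diagram sweep). This is the only place where the topology of the sphere enters essentially, and it is exactly the phenomenon with no virtual analogue --- the reason classical indices collapse while virtual ones are rich. Everything else (constancy of loop indices via Proposition~\ref{prop:index_r1}, the wrapping moves, the graph description) is already supplied by Sections~\ref{sect:indices}--\ref{sect:traits}.
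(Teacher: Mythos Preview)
The paper does not actually prove Theorem~\ref{thm:universal_classical_index}: the sentence immediately preceding it reads ``The following statements were proved in~\cite{Nct}'', and no argument is given here. The result is imported as a black box and then combined with Theorem~\ref{thm:main_theorem} to obtain Theorems~\ref{thm:indistinguishability_principle} and~\ref{thm:substitution_principle}. So there is no in-paper proof to compare your proposal against.

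That said, a couple of comments on your plan itself. Your outline --- normalize $(D,v)$ to a loop crossing (or a clasp, in the link case) and then invoke Proposition~\ref{prop:index_r1} to identify all such normal forms --- is the natural strategy and almost certainly close to what~\cite{Nct} does in the $S^2$ case. But the isolation step as you phrase it is imprecise: after the oriented smoothing at $v$ the half $D^l_v$ is in general an \emph{immersed} closed curve with self-crossings, so it does not ``bound an innermost disc''. What you really use is that the complement of a small disc about $v$ in $S^2$ is simply connected, so the half, viewed as an arc rel its endpoints at $v$, is null-homotopic there; the work is in realizing that null-homotopy by Reidemeister moves of the \emph{original} diagram that do not destroy $v$. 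Note that Reidemeister moves supported away from $v$ cannot change the isotopy type of the $2$-component smoothed link $D^l_v\cup D^r_v$, and that link need not be split (think of suitable crossings in the figure-eight knot), whereas a loop crossing has split smoothing --- so $R3$ moves \emph{through} $v$, which are allowed in $\mathfrak K_b$, are essential and must be used explicitly. You correctly flag this as the main obstacle, but the ``sweep arcs out of the disc'' description does not yet indicate how it is overcome.

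Judging from the paper's own remark following Theorem~\ref{thm:substitution_principle}, the route taken in~\cite{Nct} is somewhat different from yours: it computes the universal index for tangles in an arbitrary fixed surface $F$ as the triple (component type, order type, \emph{homotopy type}), and then specializes to $F=S^2$, where simple connectivity kills the homotopy type. Your argument trades that generality for a direct $S^2$-specific normalization; both lead to the same conclusion, but the general-surface approach packages the ``isolation'' difficulty into a single homotopy-theoretic invariant rather than an ad hoc contraction.
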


Then by Proposition~\ref{prop:basic_universal_index} the sign function is the universal signed index on $\mathfrak{CK}$; the pair $(o, sgn)$ is the universal signed index (with coefficients in $\Z_2^2$) on $\mathfrak{LCK}$, and $(\tau,sgn)$ is the universal sign index (with coefficients in $\{1,\dots,n\}^2\times\{-1,+1\}$) on $\mathfrak{CL}_n$. By Theorem~\ref{thm:main_theorem} the above mentioned maps are the universal traits in the corresponding diagram categories. We can reformulate this fact in the form of indistinguishability principle.

\begin{theorem}[Principle of indistinguishability]\label{thm:indistinguishability_principle}
\begin{enumerate}
  \item (for classical knots) No crossings of the same sign in a classical knot diagram can be distinguished by a trait.
  \item (for long classical knots) No crossings of the same sign and order type in a classical long knot diagram can be distinguished by a trait.
  \item (for classical links) No crossings of the same sign and component type in a classical link diagram can be distinguished by a trait.
\end{enumerate}
\end{theorem}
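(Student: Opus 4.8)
The plan is to obtain all three statements as immediate corollaries of Theorem~\ref{thm:main_theorem}, the classification of universal indices on classical diagram categories (Theorem~\ref{thm:universal_classical_index}), and Proposition~\ref{prop:basic_universal_index}. Concretely, ``being distinguishable by a trait'' will be reduced to ``being distinguishable by the universal signed index'', which on classical diagrams is governed entirely by the sign, together with the order type or the component type in the long-knot and link cases respectively.

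First I would fix one of the diagram categories $\mathfrak{CK}$, $\mathfrak{LCK}$, $\mathfrak{CL}_n$ and an arbitrary trait $\theta$ on it with coefficients in a set $\Theta$. By Theorem~\ref{thm:main_theorem} the universal trait coincides with the universal signed index $\sigma^u$, so by Corollary~\ref{cor:universal_signed_index_to_trait} there is a map $\psi\colon S^u\to\Theta$ with $\theta=\psi\circ\sigma^u$. Consequently, for any two crossings $v_1,v_2$ of any diagrams in the category,
\[
\sigma^u(v_1)=\sigma^u(v_2)\ \Longrightarrow\ \theta(v_1)=\theta(v_2).
\]
Hence it suffices to prove that $\sigma^u$ takes equal values on crossings that agree in sign (resp.\ in sign and order type, resp.\ in sign and component type).

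Next I would invoke Theorem~\ref{thm:universal_classical_index} together with Proposition~\ref{prop:basic_universal_index}(3). In $\mathfrak{CK}$ the universal index is trivial, so $\sigma^u=\tilde\iota^u$ is, up to isomorphism, the sign function; thus two crossings of the same sign have the same $\sigma^u$-value, hence the same $\theta$-value, which is statement~(1). In $\mathfrak{LCK}$ the universal index is the order type $o$, so $\sigma^u=(o,sgn)$, and two crossings of the same sign and the same order type have equal $\sigma^u$-value, giving statement~(2). In $\mathfrak{CL}_n$ the universal index is the component type $\tau$, so $\sigma^u=(\tau,sgn)$, and two crossings of the same sign and component type have equal $\sigma^u$-value, giving statement~(3). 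In every case the conclusion about traits then follows from the displayed implication.

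There is essentially no obstacle remaining, since the whole burden of the argument lies in the wrapping-swap proof of Theorem~\ref{thm:main_theorem} and in the results of~\cite{Nct} quoted as Theorem~\ref{thm:universal_classical_index}. The only point requiring a little care is the precise reading of ``distinguishable by a trait'': a trait $\theta$ separating two crossings would, via the factorization $\theta=\psi\circ\sigma^u$, force $\sigma^u$ to separate them, contradicting the above. One may also remark that sign, order type and component type are themselves traits, so in each case crossings that differ in the relevant data genuinely are distinguishable, i.e.\ the classification is sharp.
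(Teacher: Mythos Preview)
Your proposal is correct and follows essentially the same route as the paper: combine Theorem~\ref{thm:universal_classical_index} with Proposition~\ref{prop:basic_universal_index}(3) to identify the universal signed index as $sgn$, $(o,sgn)$, $(\tau,sgn)$ in the three cases, and then invoke Theorem~\ref{thm:main_theorem} (or equivalently Corollary~\ref{cor:universal_signed_index_to_trait}) to conclude that every trait factors through this data. The paper's argument is exactly this, stated in the paragraph preceding the theorem.
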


Also, we can reformulate the previous theorem as a \emph{substitution principle}. A manifestation of this principle is shown in Fig.~\ref{pic:crossing_substitute}.
\begin{theorem}[Substitution principle for classical knots]\label{thm:substitution_principle}
Let $v$ and $w$ be crossing of the same sign in a classical knot diagram $D$. Then there exist a sequence of Reidemeister moves which transforms $D$ to itself and moves the crossing $v$ to the crossing $w$.
\end{theorem}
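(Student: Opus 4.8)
The plan is to obtain the substitution principle as an immediate unpacking of the combinatorial model of the universal trait. Recall from the discussion preceding Theorem~\ref{thm:indistinguishability_principle} that, combining the first part of Theorem~\ref{thm:universal_classical_index} (there are no nontrivial indices on $\mathfrak{CK}$) with Proposition~\ref{prop:basic_universal_index} and Theorem~\ref{thm:main_theorem}, the sign function is the universal trait $\theta^u$ on $\mathfrak{CK}$. So all the substantive work is already done, and what remains is a translation.

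First I would note that, since $v$ and $w$ have the same sign, $\theta^u(v)=\theta^u(w)$. By the tautological description of the universal trait in Proposition~\ref{prop:universal_trait_description}, $\theta^u(v)=[(D,v)]$ and $\theta^u(w)=[(D,w)]$ in $\overline{\V(\mathfrak{CK})}$; hence $(D,v)$ and $(D,w)$ lie in the same equivalence class, equivalently (Proposition~\ref{prop:universal_index_graph}) in the same connected component of $\mathcal G_0(\mathfrak{CK})$, equivalently they are equivalent objects of the based category $\mathfrak{CK}_b$.

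Next I would unravel this equivalence into a concrete sequence of moves. Every elementary morphism of $\mathfrak{CK}$ --- an isotopy of $S^2$ or a Reidemeister move --- is invertible, so the relation ``there is a morphism $(D,v)\to(D',v')$ in $\mathfrak{CK}_b$'' is genuinely symmetric; hence from the equivalence of $(D,v)$ and $(D,w)$ we extract an honest morphism $f\colon(D,v)\to(D,w)$ in $\mathfrak{CK}_b$. By definition of the based category, $f$ is a finite composition of elementary morphisms, none of which destroys the distinguished crossing, whose underlying morphism in $\mathfrak{CK}$ has source and target the \emph{same} object $D$ (both $v$ and $w$ being crossings of the one diagram $D$) and which satisfies $f_*(v)=w$. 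Absorbing the isotopy steps, this composition is precisely a sequence of Reidemeister moves carrying $D$ back to $D$ and taking $v$ to $w$, which is the assertion; Fig.~\ref{pic:crossing_substitute} illustrates it.

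I expect the only delicate point --- and hence the ``main obstacle'', though it amounts only to care with definitions --- to be the passage from the categorical phrasing back to the hands-on one: one must check that the composed morphism literally returns to the original diagram $D$, and not merely to an isotopic copy, and that the tracked crossing remains in $dom(f_*)$ at every intermediate step. Both are immediate from the setup, so beyond this bookkeeping the proof needs no computation.
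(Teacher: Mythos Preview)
Your argument is correct and is exactly what the paper intends: it presents Theorem~\ref{thm:substitution_principle} as a direct reformulation of the fact that the sign is the universal trait on $\mathfrak{CK}$, without writing out a separate proof. Your unpacking via Proposition~\ref{prop:universal_trait_description} and the based category $\mathfrak{CK}_b$ is precisely the intended translation, and the bookkeeping concerns you flag (invertibility of elementary morphisms, the distinguished crossing remaining in $dom(f_*)$) are the only points requiring care.
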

Analogous statements can be formulated for long classical knots and classical links.

\begin{remark}
Analogues of the principles of indistinguishability and substitution can be formulated for tangles in a fixed surface: if two crossings in a tangle diagram have the same component, order and homotopy types (see~\cite{Nct}) then one of them can substitute for the other, and the crossings can not be distinguished.
\end{remark}

\begin{remark}
There is an analogue of substitution principle for \emph{arcs} of classical knot diagrams. By an arc we mean a part of a knot diagram which ends at crossings and contains no crossings inside.
\end{remark}

\begin{theorem}[Substitution principle for arcs of classical knot diagram]\label{thm:subsitution_classical_arc}
Let $D$ be a classical knot diagram in the sphere $S^2$ and $a,a'$ be two arcs of $D$. Then there is a sequence of isotopies and Reidemeiester moves which transforms $D$ to itself and transforms the arc $a$ to the arc $a'$.
\end{theorem}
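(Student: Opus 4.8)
The plan is to reduce the statement about arcs to the already-established substitution principle for crossings (Theorem~\ref{thm:substitution_principle}), via a trait-style argument. First I would set up the combinatorial framework: mimic Definitions~\ref{def:based_diagram_category} and~\ref{def:universal_trait}, but with a distinguished \emph{arc} $a$ of a classical knot diagram $D$ in place of a distinguished crossing. Call an \emph{arc trait} a labelling of the arcs of classical knot diagrams invariant under those Reidemeister moves and isotopies that do not destroy the chosen arc. As in Proposition~\ref{prop:universal_trait_description}, the universal arc trait assigns to $a$ the equivalence class $[(D,a)]$ under such morphisms, and the statement to prove is exactly that this universal arc trait is trivial, i.e. that $(D,a)\sim(D,a')$ for any two arcs $a,a'$ of any classical knot diagram $D$.

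Next I would handle the case where $a$ and $a'$ lie in the \emph{same} diagram $D$. The key observation is that an arc of a classical knot diagram in $S^2$ is the overcrossing strand at its two endpoint crossings (or, for the diagram with no crossings, the whole circle, a degenerate case to dispatch separately), so an arc is essentially determined by a pair of adjacent crossings together with the overcrossing datum. I would exploit the wrapping machinery of Proposition~\ref{prop:wrapping_properties}: a wrapping of a crossing $v$ drags the overarc around, and by wrapping swaps (part 3) combined with the crossing substitution principle one can transport the pair "(crossing, attached overarc)" from one location of $D$ to another. More directly: use Theorem~\ref{thm:substitution_principle} to move an endpoint crossing $v$ of $a$ to an endpoint crossing $w$ of $a'$ by a sequence of Reidemeister moves returning $D$ to itself; one must check this sequence can be chosen so that the distinguished \emph{arc} emanating from $v$ (not just the crossing) is carried to the arc emanating from $w$. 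Here the order type / local over-under structure is the only potential invariant of an arc-endpoint, and in $\mathfrak{CK}$ there are no nontrivial indices (Theorem~\ref{thm:universal_classical_index}.1), so after possibly a wrapping half-turn to correct orientation the arc is carried correctly. Then a second application moves the other endpoint, and a first Reidemeister move lets one create or absorb a loop crossing so that an arc containing no crossings can be compared with one that does — reducing everything to a normal form, say the arc adjacent to a fixed loop crossing produced by an $R1$ move.

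Finally, for arcs $a\subset D$ and $a'\subset D'$ in \emph{different} diagrams of the unknot: since $D$ and $D'$ present the same knot there is a morphism $g\colon D\to D'$ in $\mathfrak{CK}$; $g$ need not preserve $a$, but by inserting an extra $R1$-loop along $a$ beforehand (as in the proof of Proposition~\ref{prop:index_r1}, where a crossing is doubled and kept aside) one arranges that a chosen sub-arc of $a$ survives $g$, landing on some arc of $D'$; then apply the same-diagram case inside $D'$ to reach $a'$. The main obstacle I expect is the bookkeeping in the same-diagram step: verifying that the substitution sequence for the endpoint crossing can be upgraded to carry the \emph{arc} — i.e. that no hidden invariant distinguishes "the overarc at $v$ going one way" from "going the other way." This is where triviality of all indices on $\mathfrak{CK}$ and a corrective wrapping (Remark~\ref{rem:universal_trait_involution}) must be invoked carefully; the degenerate crossingless case and the choice of a canonical normal-form arc also need explicit, if routine, attention.
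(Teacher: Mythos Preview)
Your proposal is substantially more involved than the paper's argument and contains both a factual slip and an unresolved circularity.

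First, the slip: you assert that ``an arc of a classical knot diagram in $S^2$ is the overcrossing strand at its two endpoint crossings.'' In the paper's convention (stated in the remark just before the theorem) an arc is any segment between two consecutive crossing-points, so each endpoint may be an over- or an undercrossing independently. Your encoding of an arc as ``a pair of adjacent crossings together with the overcrossing datum'' is therefore not faithful, and the wrapping move, which only rotates the \emph{over}strand at a crossing, will not permute all four incident arcs.

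Second, and more seriously, the core step --- upgrading the crossing substitution of Theorem~\ref{thm:substitution_principle} so that a specified incident arc is carried along --- is exactly the content of the statement, and your justification is circular. You invoke Theorem~\ref{thm:universal_classical_index}(1) (no nontrivial indices on $\mathfrak{CK}$) to argue that nothing distinguishes the arcs at $v$; but that theorem concerns \emph{crossing} indices, not arc traits. Knowing that the arcs incident to a given crossing are interchangeable by autoequivalences of $D$ is already an instance of the arc-substitution you are trying to prove. Note also that the endpoint crossings of $a$ and $a'$ may have opposite signs, so Theorem~\ref{thm:substitution_principle} need not even apply to them directly.

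The paper's proof bypasses the crossing machinery entirely and is essentially two lines. Pick points $A\in a$, $A'\in a'$; rotate $S^2$ so that $A$ lands on $A'$ with the arcs tangent there, obtaining a diagram $D'$. Puncturing at $A'$ turns $D$ and $D'$ into long-knot diagrams in $\mathbb C=S^2\setminus\{A'\}$, and since they close up to the same classical knot they represent the same long knot. Hence a sequence of isotopies and Reidemeister moves in $\mathbb C$ --- all away from the point at infinity, so fixing the arc through $A'$ --- takes $D'$ back to $D$. Composed with the initial rotation, this is the required autoequivalence carrying $a$ to $a'$. No use is made of Theorems~\ref{thm:main_theorem} or~\ref{thm:substitution_principle}.
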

\begin{proof}
  Choose arbitrary points $A\in a$ and $A'\in a'$. Rotate the diagram $D$ on the sphere so that $A$ moves to $A'$ and $a$ moves to an arc tangent to $a'$. Denote the diagram after the rotation by $D'$. Choose the point $A'$ for the infinity point in $S^2=\mathbb C\cup\{\infty\}$. The long knot diagrams $D\setminus\{A'\}$ and $D'\setminus\{A'\}$ define one long knot. Thus, there is a sequence of isotopies and Reidemeister moves in $\mathbb C$ which transforms $D'$ to $D$. Then the composition of the initial rotation and this sequence is the required chain of moves.
\end{proof}

\begin{remark}
The analogous statement for \emph{plane} classical diagrams (diagrams in $\R^2$) is wrong. One can construct a winding number-like invariant to distinguish the arcs of plane diagrams.
\end{remark}


So we see that the hope to use some inherent information on the crossings for construction of new invariants of classical knot is false. There are several ways to deal with this:
\begin{itemize}
\item Consider \emph{statistical} invariants which treat all crossings equally, like Jones polynomial, Khovanov homology and all the other combinatorial knot invariants.
\item Use not invariant but \emph{equivariant} labels for crossings. We can require that the labels of the crossings remain constant up to some isomorphisms. An example of invariants of this type are parity functors~\cite{Npf} and (bi)quandles~\cite{EN}. The substitution principle for arcs does not prevent the existence of coloring invariants.
\item Use nonreidemeister descriptions of knots. There are different decompositions of knots (into prime knots or JSJ decomposition). May be some of them can be catched with combinatorial elements of knots other that crossings (for example, quadrisecants, petal diagrams etc.)
\item There still remain virtual knots where indices and parities proved their worth.
\end{itemize}

\bibliographystyle{amsplain}

\appendix

\section{Examples of indices}

\subsection{Basic indices}

\begin{examplap}[The sign]\label{ex:sign_index}
The sign function is a signed index with coefficients in $\Z_2=\{-1,+1\}$ for the diagram categories of tangles in a surface and virtual tangles and all their diagram subcategories.
\end{examplap}

\begin{remarkap}
Often an index can be defined first as a signed index $\sigma$ on flat tangles and then for virtual tangles, it defines an index $\hat\sigma$ using the sign function (see Proposition~\ref{prop:sign_and_index}). This remark can be applied to the following two basic indices.
\end{remarkap}

\begin{examplap}[Component index]\label{ex:component_index}
Let $D=D_1\cup\cdots\cup D_n$ be a diagram of an oriented flat tangle (or a flat tangle in a fixed surface), and $v$ be a crossing of $D$. Then $v$ is an intersection point of components $D_i$ and $D_j$. We order the component according to the orientation as shown in Fig.~\ref{pic:component_type} left. The \emph{flat component (signed) index} of the crossing $v$ is the ordered pair $\tau^f(v)=(i,j)$. The flat component index $\tau^f$ is a signed index with coefficients in $\{1,\dots,n\}^2$.

\begin{figure}[h]
\centering\includegraphics[width=0.4\textwidth]{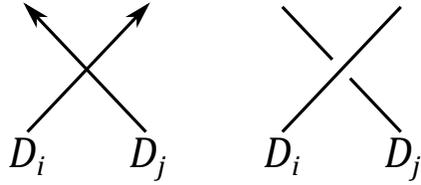}
\caption{The flat component index (left) and the component index (right)}\label{pic:component_type}
\end{figure}

For a diagram $D=D_1\cup\cdots\cup D_n$ of an oriented tangle, the \emph{component index} $\tau(v)$ of a crossing $v\in\V(D)$ is equal to $(i,j)$ if the component $D_i$ overcrosses the component $D_j$ at $v$ (Fig.~\ref{pic:component_type} right). It is an index with coefficients in $\{1,\dots,n\}^2$ on virtual tangles or tangles in a fixed surface. Note that $\tau=\hat\tau^f$.
\end{examplap}

\begin{remarkap}
On diagram of free tangles one can define a component index with coefficients in the set of unordered pairs $\{i,j\}$, $i,j\in\{1,\dots,n\}$.
\end{remarkap}

\begin{examplap}[Order index]\label{ex:order_index}
Let $D$ be a diagram of a flat long knot and $v$ be a crossing of $D$. The oriented smoothing at $v$ splits the diagram into halves $D^l_v$ and $D^r_v$ (Fig~\ref{pic:knot_halves}). Since $D$ is a long knot diagram, one of the halves is closed. Denote in by $D^c_v$. We define the \emph{flat order (signed) index} $o^f(v)$ of the crossing $v$ to be equal to $+1$ if $D^c_v=D^l_v$ and to be equal to $-1$ if $D^c_v=D^r_v$ (Fig.~\ref{pic:flat_order_index}). It is a signed index with coefficients in $\Z_2=\{-1,+1\}$ on flat knot diagrams.

\begin{figure}[h]
\centering
  \includegraphics[width=0.4\textwidth]{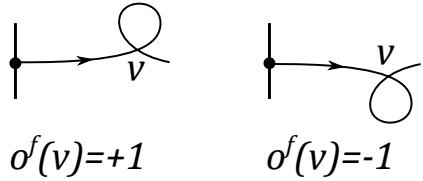}
  \caption{Flat order index}\label{pic:flat_order_index}
\end{figure}

On diagrams of long virtual knot, one can consider the \emph{order index} $o=-\hat o^f$. It is an index with coefficients in $\Z_2$. The order index discern the early undercrossings ($o=-1$) and the early overcrossings ($o=+1$).
\end{examplap}

\begin{remarkap}
Flat component and order signed indices can play the role of the sign function for flat knots and tangles in some situations.
\end{remarkap}

\subsection{Gluing index}

The gluing index appeared implicitly in the paper of A. Henrich~\cite{Henrich}, and a more general signed gluing index was used by P. Cahn in~\cite{C}.

\begin{definitionap}
Let $\mathfrak K$ be a diagram category of flat tangles (or flat tangles in a fixed surface). We define the \emph{signed singular diagram category} $\mathfrak K_{ss}$ as follows. The objects of $\mathfrak K_{ss}$ are triples $(D,v,\epsilon)$ where $D\in\mathfrak K$, $v\in\V(D)$ and $\epsilon\in\{-1,+1\}$. In other words, the diagrams of $\mathfrak K_{ss}$ are \emph{signed singular diagrams}, i.e. flat tangle diagrams with one singular crossing (``glued'' from a flat crossing $v$) marked with a sign $\epsilon=\pm1$.

The moves on signed flat diagrams are the moves which don't involve the singular crossing, and the second and third Reidemeister moves shown in Fig.~\ref{pic:signed_singular_reidemeister}.
\begin{figure}[h]
\centering\includegraphics[width=0.6\textwidth]{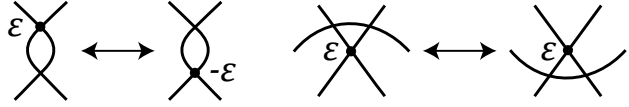}
\caption{Reidemeister moves with the signed singular crossing}\label{pic:signed_singular_reidemeister}
\end{figure}

Equivalence classes of signed singular diagrams modulo moves on them are called \emph{signed singular (flat) tangles}.
\end{definitionap}

\begin{examplap}[Gluing index]\label{ex:gluing_index}
Let $\mathcal{S}$ be the set of signed singular flat tangles. There is an involution on $\mathcal S$ which changes the sign of the singular crossing to the opposite. For a flat tangle diagram $D$ and a crossing $v\in\V(D)$ define its \emph{gluing signed index} $\sigma_{gl}(v)\in\mathcal S$ to be the signed singular flat tangle that corresponds to the diagram $(D,v,+1)$ with the singular crossing $v$ and the sign of the crossing $+1$. Then $\sigma_{gl}$ is a signed index on flat tangle diagrams with coefficients in $\mathcal S$. In fact, $\sigma_{gl}$ is the universal signed index on the flat tangle diagrams.

The linking invariant $lk_{\sigma_{gl}}$ of the signed index  is the operator $\mu$ in~\cite{C}.

The correspondent index $\bar\sigma_{gl}$ takes values in the tangles with one singular crossing (there is no any signs on the crossing). This index is defined on diagrams of flat tangles, so on diagrams of virtual tangles. The linking invariant $lk_{\bar\sigma_{gl}}$ is essentially the gluing invariant $\mathbf G$ in~\cite{Henrich}.
\end{examplap}

The universality of the index $\sigma_{gl}$ means that other indices on flat tangles can be presented by compositions $\psi\circ\sigma_{gl}$ where $\psi$ are maps from $\mathcal{S}$ to some sets $I$, i.e. the maps $\psi$ are $I$-valued invariants of the signed singular flat tangles. Examples of such indices are given below.

\subsection{Smoothing index}

\begin{example}\label{ex:oriented_smoothing_index}
Let $\mathfrak K$ be the diagram category of oriented flat tangles (or oriented flat tangles in a fixed surface $F$) with numbered components. Let $\mathcal T$ be the set of oriented flat tangles (or oriented flat tangles in $F$), i.e. the set of equivalence classes of diagrams modulo morphisms.  Let $D\in\mathfrak K$ be a tangle diagram and $v$ be a crossing of $D$. Consider the \emph{oriented and unoriented smoothings} $D^{or}_v$ and $D^{un}_v$ of the diagram $D$ at the crossing $v$ (see Fig.~\ref{pic:smoothings}).
\begin{figure}[h]
\centering\includegraphics[width=0.6\textwidth]{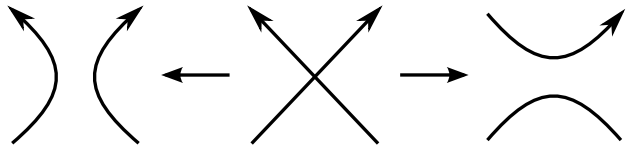}
\caption{Oriented (left) and unoriented (right) smoothings}\label{pic:smoothings}
\end{figure}
The smoothings $D^{or}_v$ and $D^{un}_v$ are diagrams of oriented tangles with numbered components. The oriented smoothing has natural orientation. The orientation of the unoriented smoothing is set forcedly by the orientation rule in Fig.~\ref{pic:smoothings} (the orientation of the lower arc is determined by the orientation of the upper one).

The \emph{oriented smoothing signed index} $\sigma_{sm}^{or}(v)$ is the tangle $[D^{or}_v]\in\mathcal T$ that corresponds to the oriented smoothing diagram. It is a signed index with coefficients in $\mathcal T$ where the involution on $\mathcal T$ swaps the numbers of the components containing the left and the right arcs on the smoothed diagram $D^{or}_v$ (involution is trivial if the arcs belong to one component).

The  \emph{unoriented smoothing signed index} $\sigma_{sm}^{un}(v)$ is the tangle $[D^{un}_v]\in\mathcal T$ that corresponds to the unoriented smoothing diagram. It is a signed index with coefficients in $\mathcal T$ where the involution on $\mathcal T$ changes the orientation of the smoothed component in $D^{un}_v$.
\end{example}

The index $[\tilde K^d_{smooth}]$ in~\cite{Henrich} and the index $\tilde L_c$ in~\cite{CGX} coincide with the index $\bar\sigma^{or}_{sm}$ for flat knots. The index $\tilde K_c$ in~\cite{CGX} coincides with the index $\bar\sigma^{un}_{sm}$ for flat knots.

Some indices can be deduced from the smoothed indices $\sigma_{sm}^{or}(v)$ and $\sigma_{sm}^{un}(v)$.

\begin{examplap}[Homological index]\label{ex:homological_index}
Let $\mathfrak K$ be the diagram category of a (flat) knot $K$ in a surface $F$. Let $D\in\mathfrak K$ and $v\in\V(D)$. The correspondence $v\mapsto [D^l_v]$ defines an oriented parity $hp$ with coefficients in $S=H_1(F,\Z)/[K]$ called the \emph{homological parity}~\cite{IMN,Npf}. Then $hp=\psi\circ\sigma_{sm}^{or}$ where the map $\psi$ from 2-component links to $S$ to is defined by the formula $\psi(D_1\cup D_2)=[D_1]$.

By definition the homological parity is a signed index, and if $K$ is a non flat knot we have the index $\widehat{hp}$ such that $\widehat{hp}(v)=[D^+_v]\in S$, $v\in\V(D)$. The linking invariant of $\widehat{hp}$ coincides with the small state sum $W_K$ in~\cite{F}.

The indices $f_\gamma$ in~\cite{CGX1} are obtained from the homology index using the map $\psi\colon S\to \Z$, $\alpha\mapsto \alpha\cdot\gamma$, where $\gamma\in H_1(F,\Z)$ and $D\cdot \gamma=0$.
\end{examplap}

The Gaussian index $n$ can be also deduced from the oriented smoothed index.

\begin{examplap}[Gaussian index]\label{ex:gaussian_index}
Let $\mathfrak K$ be the diagram category of flat knots. Let $D\in\mathfrak K$ and $v\in\V(D)$. The \emph{Gaussian index} $n(v)$ can be defined as $n(v)=W(\sigma^{or}_{sm}(v))$ where $W(\cdot)$ is the wriggle number~\cite{FK}. The wriggle number $W(D_1\cup D_2)\in\Z$ of a two-component link $D_1\cup D_2$ can be identified with the linking invariant $lk_{\tau}(D_1\cup D_2)$ of the component index on the link (the group $A_{\tau}$ is isomorphic to $\Z$ for two-component links).

The Gaussian index $n$  is a signed index with coefficients in $\Z$. For virtual knot diagrams, the corresponding index $\hat n=sgn\cdot n$ is the index $Ind$.
\end{examplap}

\begin{examplap}\label{ex:VKP_index}
The polynomials $F^n_D(t,\ell)$ in~\cite{KPV} rely on the combined index $\iota=(Ind,u_n\circ\sigma^{un}_{sm})$ where $u_n$ is the coefficient by the term $t^n$ of the (Gaussian) index polynomial for flat knots $u(t)\in\Z[t]$ (see~\cite{T}). The index $\iota$ takes values in the set $\Z\times\Z$.
\end{examplap}

\subsection{Based matrix index}

Based matrices were defined by V. Turaev in~\cite{T}. The based matrix of a knot diagram carries information about intersection numbers for halves of the diagram at the crossings. So based matrices can be considered as homological analogues of knot diagrams. Then an analogue of the gluing signed index $\sigma_{gl}$ are singed singular based matrices defined in~\cite{C}.

\begin{definitionap}\label{def:signed_singular_based matrix}
A \emph{signed singular based matrix} is a quadruple $T=(G,s,d,b,\epsilon)$ where $G$ is a finite set, $s,d\in G$, $s\ne d$,  $b\colon G\times G\to\Z$ is a skew-symmetric map and $\epsilon\in\{-1,+1\}$.

An element $g\in G\setminus\{s\}$ is \emph{annihilating} if $b(g,h)=0$ for any $h\in G$.

An element $g\in G\setminus\{s\}$ is \emph{core} if $b(g,h)=b(g,s)$ for any $h\in G$.

Two elements $g_1,g_2\in G\setminus\{s\}$ are \emph{complementary} if $b(g_1,h)+b(g_2,h)=b(g,s)$ for any $h\in G$.

We consider the following \emph{elementary extensions}:
\begin{itemize}
  \item[$M1$] Given a signed singular based matrix $T=(G,s,d,b,\epsilon)$, form the signed singular based matrix $\bar T=(\bar G,s,d,\bar b,\epsilon)$ such that $\bar G = G\sqcup\{g\}$, $\bar b|_{G\times G}=b$ and $\bar b(g,h)=0$ for all $h\in G$.
  \item[$M2$] Given $T=(G,s,d,b,\epsilon)$, form the signed singular based matrix $\bar T=(\bar G,s,d,\bar b,\epsilon)$ such that $\bar G = G\sqcup\{g\}$, $\bar b|_{G\times G}=b$ and $\bar b(g,h)=b(s,h)$ for all $h\in G$.
  \item[$M3$] Given $T=(G,s,d,b,\epsilon)$, form the signed singular based matrix $\bar T=(\bar G,s,d,\bar b,\epsilon)$ such that $\bar G = G\sqcup\{g_1,g_2\}$, $\bar b|_{G\times G}=b$, and $g_1$ and $g_2$ are complementary in $\bar T$, i.e.  $\bar b(g_1,h)+\bar b(g_2,h)=b(s,h)$ for all $h\in G$.
  \item[$N$] Given $T=(G,s,d,b,\epsilon)$ and an element $g\in G\setminus\{s\}$ such that $d$ and $g$ are complementary, form the signed singular based matrix $\bar T=(G,s,g,b,-\epsilon)$.
\end{itemize}

A signed singular based matrix $T$ is called \emph{primitive} if the inverse excisions $M1^{-1}, M2^{-1}, M3^{-1}$ can not be applied to $T$.

Two signed singular based matrix $T$ and $T'$ are \emph{homologous} if $T'$ can be obtained from $T$ by a sequence of elementary excisions $M1, M2, M3, N$ and the inverse transformations to them.
\end{definitionap}

The model example for a signed singular based matrix is the following. Let $(D,v,\epsilon)$ be a signed singular diagram of a flat knot. Then the quintuple $T(D,v,\epsilon)=(G,s,d,b,\epsilon)$ where $G=\{s\}\sqcup \V(D)$, $d=v$, $b(w,w')=[D^l_w]\cdot[D^l_{w'}]$, $w,w'\in\V(D)$, is the intersection number of the knot halves, and $b(s,w)=[D]\cdot[D^l_w]$, $w\in\V(D)$, is a signed singular base matrix.

With the correspondence $(D,v,\epsilon)\mapsto T(D,v,\epsilon)$, Reidemeister moves on signed singular diagrams turn into moves $M1, M2, M3, N$.

\begin{propositionap}[\cite{C}]\label{prop:based_matrix}
1. If two signed singular diagrams $(D,v,\epsilon)$ and $(D',v',\epsilon')$ are equivalent then the signed singular based matrices $T(D,v,\epsilon)$ and $T(D',v',\epsilon')$ are homologous.

2. For any signed singular based matrix $T$ there exist a primitive signed singular based matrix $T_\bullet=(G_\bullet,s,d,b_\bullet,\epsilon)$ which is homologous to $T$. The based matrix $T_\bullet$ is unique up to isomorphisms, the move $N$, and the replacement of the core element $d$ by an annihilating element together with simultaneous change of $\epsilon$ by $-\epsilon$ (in the case when $d$ is a core element).
\end{propositionap}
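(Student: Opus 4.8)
The proposition is the signed-singular counterpart of Turaev's based-matrix theory for virtual strings~\cite{T}, so the plan is to follow that theory step for step, adding only the bookkeeping forced by the distinguished element $d=v$ and the sign $\epsilon$. Note that, as a quadruple, $T(D,v,\epsilon)$ is nothing but Turaev's based matrix of the underlying flat diagram $D$ --- vertex set $\{s\}\sqcup\V(D)$, the same skew form $b$ --- decorated with the extra data $(d,\epsilon)$, which lets most of Turaev's computations be imported verbatim and isolates what is genuinely new.

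\emph{Part 1} is a verification carried out one move at a time. An isomorphism of signed singular diagrams plainly induces an isomorphism of quadruples fixing $s$, $d$ and $\epsilon$. For a flat Reidemeister move performed away from the singular crossing, Turaev's computation shows that the $(G,s,b)$ part changes by a composition of the extensions $M1^{\pm1}$, $M2^{\pm1}$, $M3^{\pm1}$ and an isomorphism (e.g.\ an R1 curl produces an annihilating vertex and so changes $b$ by $M1^{\pm1}$), while $d$ and $\epsilon$ are untouched; one only has to check that the added or deleted vertices are never $s$ or $d$, so that the excisions are legitimate on the enlarged quadruple. There remain the second and third Reidemeister moves of Fig.~\ref{pic:signed_singular_reidemeister} that involve the singular crossing, which I would read off from the local pictures by recording the change of the homology classes $[D^l_w]$ of the halves. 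The decisive one transports the singular marking from $v$ to a crossing $g$ with which $v$ forms a bigon and reverses $\epsilon$; in homological terms $d$ and $g$ then become complementary, so the net effect is exactly the move $N$ --- the signed-singular shadow of the wrapping swap of Proposition~\ref{prop:wrapping_properties}. Collecting the cases yields statement~(1).

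\emph{Part 2, existence.} Given an arbitrary signed singular based matrix $T$, apply whichever of the inverse excisions $M1^{-1}$, $M2^{-1}$, $M3^{-1}$ is available; each strictly lowers $|G|$, while the move $N$ alters neither $G$ nor $b$ and is not used here, so the procedure terminates at a quadruple $T_\bullet$ to which no inverse excision applies, i.e.\ a primitive one. Validity is maintained throughout: $b$ stays skew-symmetric, and $s\ne d$ is preserved because a deletable annihilating or core element, or a deletable complementary pair, never contains $s$, while if it would contain $d$ the deletion is simply declined --- this is exactly why a primitive $T_\bullet$ may still carry an annihilating element, a core element, or a complementary pair as long as $d$ is one of the elements involved.

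\emph{Part 2, uniqueness.} Here I would invoke Turaev's theorem that a homology class of ordinary based matrices contains a unique primitive representative up to isomorphism, and upgrade it to the quadruple setting. If $T_\bullet$ and $T_\bullet'$ are primitive and homologous, then on the $(G,s,b)$ level they agree up to isomorphism and all the residual freedom concerns the element $d$ alone: when $d$ lies in a complementary pair the two admissible positions of the distinguished element are interchanged by $N$, and when $d$ is core the composition $M1\cdot N\cdot M2^{-1}$ carries $T_\bullet$ to an equally primitive matrix in which $d$ has become annihilating and $\epsilon$ has been reversed, and conversely. Proving that isomorphism, $N$, and this core-to-annihilating exchange of $d$ (with its sign flip) exhaust the possible discrepancies is the crux and the main obstacle; it rests on Turaev's rigidity for primitive based matrices together with a short classification of the ways in which $d$ by itself can obstruct full reduction. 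Everything else is a routine transcription of~\cite{T}, carried out in detail in~\cite{C}.
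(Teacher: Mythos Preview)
The paper does not supply its own proof of this proposition: it is stated with the attribution \cite{C} and used as a black box, so there is no in-paper argument to compare against. Your outline is the natural one and matches what Cahn does --- namely, transport Turaev's based-matrix machinery from \cite{T} to the decorated setting, checking that the ordinary Reidemeister moves act through $M1^{\pm1},M2^{\pm1},M3^{\pm1}$ without touching $(d,\epsilon)$, while the singular second Reidemeister move of Fig.~\ref{pic:signed_singular_reidemeister} is precisely the move $N$; existence of $T_\bullet$ is by descent on $|G|$, and uniqueness is Turaev's rigidity for primitive based matrices refined by a case analysis of how $d$ can obstruct the last excision. Your sequence $M1\cdot N\cdot M2^{-1}$ realising the core/annihilating exchange of $d$ with sign flip is correct. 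The only part you flag as ``the crux and the main obstacle'' --- that isomorphism, $N$, and this exchange exhaust the ambiguity --- is indeed where the work lies, and your sketch defers it to \cite{C}; that is exactly what the present paper does too.
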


Thus, $T_\bullet(D,v,\epsilon)$ is an invariant of signed singular based matrices (up to isomorpisms and the moves described in Proposition~\ref{prop:based_matrix}).

\begin{examplap}[Based matrix index]\label{ex:based_matrix_index}
Let $\mathfrak {FK}$ be the diagram category of flat knots. Let $D\in\mathfrak {FK}$ and $v\in\V(D)$. The \emph{based matrix signed index} $\sigma_{bm}(v)$ of the crossing $v$ is defined by the formula $\sigma_{bm}(v)=T(\sigma_{gl}(v))=T_\bullet(\sigma_{gl}(v))\in\mathcal M$ where $\mathcal M$ is the set of homology classes of signed singular based matrices ($\mathcal M$ is also equal to the set of primitive signed singular based matrices up to moves described in Proposition~\ref{prop:based_matrix}). The involution on $\mathcal M$ changes the sign $\epsilon$ of a signed singular based matrix to the opposite.

The correspondent index $\bar\sigma_{bm}(v)$ appears in~\cite{Henrich}.
\end{examplap}

\begin{remarkap}
The reduced stable parity on based matrices in~\cite{Nbm} is obtained by applying the intersection formula~\cite{Nif} to an index which can be deduced from the index $\bar\sigma_{bm}(v)$.
\end{remarkap}


For virtual knots, the definition of based matrices can be refined by involving the sign function~\cite{T1}. A \emph{graded signed singular based matrix} is a quadruple $(G,s,d,b,sgn)$ where $G$ is a set, $s,d\in G$, $b\colon G\times G\to \Z$ is a skew-symmetric form and $sgn$ is a function from $G\setminus\{s\}$ to $\{-1,+1\}$. The sign of the singular based matrix is $\epsilon=sgn(d)$.

The refined definition of complementary elements are defined as follows: $g_1,g_2\in G\setminus\{s\}$ are \emph{complementary} if $sgn(g_1)=-sgn(g_2)$ and $b(g_1,h)+b(g_2,h)=b(s,h)$ for all $h\in G$. The refined elementary excisions $M3$ and $N$ use the new definition of complementarity.

Given a diagram $D$ of a virtual knot and a crossing $v\in\V(D)$, the definition of the signed singular based matrix $T(D,v,sgn(v))=(G,s,d,b,sgn(v))$ extends to the graded signed singular based matrix $T_{gr}(D,v)=(G,s,d,b,sgn)$.

With the sign function in a graded signed singular based matrix $T=(G,s,d,b,sgn)$, one can define the forms $b^{\alpha\beta}\colon G\times G\to \Z$, $\alpha,\beta\in\{-1,+1\}$, by the formulas
\begin{gather*}
b^{\alpha\beta}(g,h)=\alpha\beta sgn(g)sgn(h) b(g,h)-\frac{1-\alpha sgn(g)}2 b(g,s)-\frac{1-\beta sgn(h)}2 b(s,h),\\
b^{\alpha\beta}(g,s)=\alpha sgn(g) b(g,s),\quad b^{\alpha\beta}(s,h)=\beta sgn(h) b(s,h),\quad b^{\alpha\beta}(s,s)=0,
\end{gather*}
for $g,h\in G\setminus\{s\}$. For complementary elements $g_1,g_2\in G$ of $T$ we have the relation $b^{\alpha\beta}(g_1,h)=b^{\alpha\beta}(g_2,h)$ for any $h\in G$ and any $\alpha,\beta\in\{-1,+1\}$. Hence, the $\iota^{\alpha\beta}(g)=b^{\alpha\beta}(d,g)$ is an index with coefficients in $\Z$ on the category of graded signed singular based matrices with excision moves. Let $p^{\alpha\beta}(T)=lk_{\iota^{\alpha\beta}}(T)\in\Z[\Z]$ be the corresponding linking invariant.

\begin{examplap}[Intersection index]\label{ex:intersection_index}
Let $\mathfrak {VK}$ be the virtual knot theory. Let $D\in\mathfrak {VK}$ be a diagram and $v$ be a crossing in $D$. The \emph{intersection index} $i^{\alpha\beta}(v)$ of the crossing $v$ is defined as $i^{\alpha\beta}(v)=p^{\alpha\beta}\circ T_{gr}(D,v)\in\Z[\Z]$. The the linking invariants of the indices $i^{\alpha\beta}$, $\alpha,\beta\in\{-1,+1\}$ can be identified (up to terms with the loop index values) with the polynomials $f_{ij}$ in~\cite{HNNS}.
\end{examplap}

\subsection{Secondary index}

Let us recall the definition of the oriented parity~\cite{Nwp,Npf}.
\begin{definitionap}\label{def:oriented_parity}
Let $\mathfrak K$ be a diagram category. An \emph{oriented parity} $p$ on $\mathfrak K$ is a signed index with coefficients in an abelian group $A$ (with the involution $x^*=-x$, $x\in A$) which obeys the following conditions:
\begin{itemize}
  \item[(P0)] if a first Reidemeister move can be applied to a crossing $v$ in a diagram $D\in\mathfrak K$ then $p(v)=0$.
  \item[(P3+)] if $f\colon D\to D'$ is a third Reidemeister move then
\[
\epsilon_\Delta(v_1)\cdot p_D(v_1)+\epsilon_\Delta(v_2)\cdot p_D(v_2)+\epsilon_\Delta(v_3)\cdot p_D(v_3)=0
\]
where $v_1,v_2,v_3$  are the crossings involved in the move and $\epsilon_\Delta(v_i)$ is the incidence index of the crossing $v_i$ to the disappearing triangle $\Delta$, see Fig.~\ref{pic:incidence_index}.
\begin {figure}[h]
\centering
\includegraphics[width=0.1\textwidth]{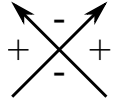}
\caption{Incidence indices}\label{pic:incidence_index}
\end {figure}
\end{itemize}
\end{definitionap}

An examples of oriented parities are the homological parity (Example~\ref{ex:homological_index}) and the Gaussian index $n$ (Example~\ref{ex:gaussian_index}).

\begin{examplap}[Secondary index]
Let $p$ be an oriented parity with coefficients in an abelian group $A$ on the diagram category $\mathfrak {FK}$ of flat knots. Let $\tilde A = \bigoplus_{a\in A}\Z[A/\langle a\rangle]$, and $\ast$ be the involution on $\tilde A$ such that $(A_a)^\ast=A_{-a}$, $a\in A$, and
\[
\left(\sum_{x\in A/\langle a\rangle}\lambda_x\cdot [x]\right)^\ast=-\sum_{x\in A/\langle a\rangle}\lambda_x\cdot [-x],\quad a\in A.
\]
For a diagram $D\in\mathfrak {VK}$ and a crossing $v\in\V(D)$ define the \emph{secondary index} of the crossing $v$ by the formula
\[
\sigma_p(v)=\sum_{v'\in\V(D)}lk(v,v')\cdot [lk(v,v')\cdot p(v')]\in \Z[A/\langle p(v)\rangle]\subset \tilde A
\]
where the number $lk(v,v')$ is defined a shown in Fig.~\ref{pic:linking_number} (the diagram $D$ is considered as a Gauss diagram; the chords in flat knot diagrams are oriented as if they were positive crossings). We set $lk(v,v)=0$ for any $v\in\V(D)$.
\begin {figure}[h]
\centering
\includegraphics[width=0.45\textwidth]{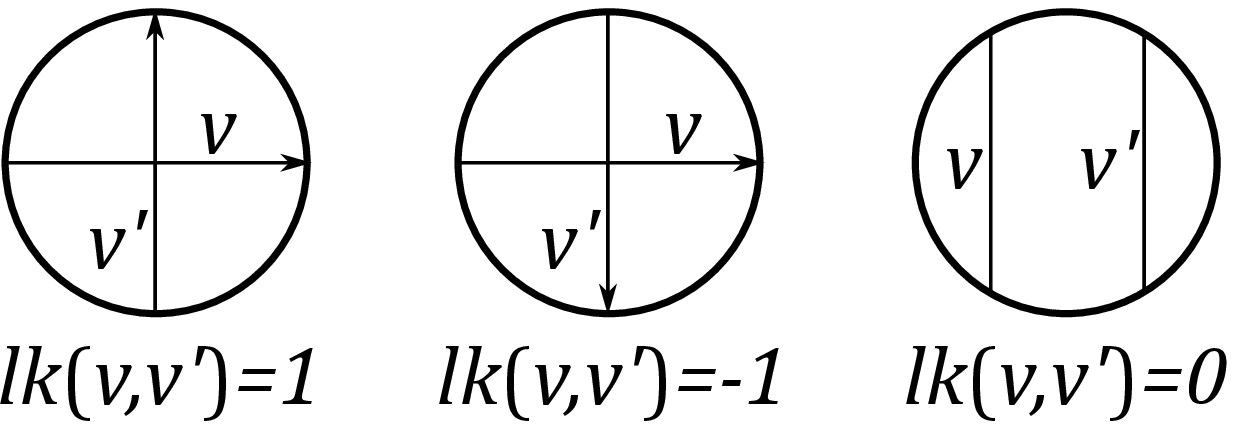}
\caption{The number $lk(v,v')$}\label{pic:linking_number}
\end {figure}
\end{examplap}

\begin{propositionap}\label{porp:secondary_index}
The map $\sigma_p$ is a signed index on $\mathfrak {FK}$ with coefficients in $\tilde A$.
\end{propositionap}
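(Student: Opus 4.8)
The plan is to verify directly on Gauss diagrams the two defining properties of a signed index: invariance (I0) under every elementary morphism, and the twisted equality (I2+) under a decreasing second Reidemeister move. One first notes that the target is well posed: for $v\in\V(D)$ the value $\sigma_p(v)$ lies in the summand $\Z[A/\langle p(v)\rangle]$ of $\tilde A$, and since $p$ is itself a signed index the subgroup $\langle p(v)\rangle$ never changes under a morphism that keeps $v$ alive (under a decreasing second move $p(v_1)=-p(v_2)$, but $\langle p(v_1)\rangle=\langle p(v_2)\rangle$, matching the summand swap $A_a\leftrightarrow A_{-a}$ built into $\ast$). Isotopies and isomorphisms of diagrams leave $\sigma_p$ untouched, so only the three Reidemeister moves require work.

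For a first Reidemeister move creating a crossing $w$, the parity axiom (P0) gives $p(w)=0$, and the chord of $w$ has adjacent endpoints on the Gauss circle, so $lk(v,w)=0$ for every other crossing $v$ while the $lk$-values of all surviving pairs are unchanged; hence the term of $w$ never contributes to $\sigma_p(v)$ and (I0) holds for $R1$. For a second Reidemeister move adding a pair $w_1,w_2$, the local structure of an $R2$ pair on the Gauss diagram gives $lk(v,w_1)=-lk(v,w_2)$ for every surviving $v$ and leaves the $lk$-values of all surviving pairs alone, while $p$ being a signed index gives $p(w_1)=-p(w_2)$; therefore the two new terms cancel,
\begin{multline*}
lk(v,w_1)\cdot[lk(v,w_1)p(w_1)]+lk(v,w_2)\cdot[lk(v,w_2)p(w_2)]\\
=lk(v,w_1)\cdot[lk(v,w_1)p(w_1)]-lk(v,w_1)\cdot[lk(v,w_1)p(w_1)]=0 .
\end{multline*}
The same bookkeeping run on a decreasing $R2$ removing $v_1,v_2$ yields (I2+): using $p(v_1)=-p(v_2)$, $lk(v_1,v')=-lk(v_2,v')$ for $v'\neq v_1,v_2$, and $lk(v_1,v_2)=lk(v_2,v_1)=0$, one finds term by term that $\sigma_p(v_1)$ equals the image of $\sigma_p(v_2)$ under the involution $\ast$ of $\tilde A$.

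The substantive step is invariance under a third Reidemeister move among crossings $v_1,v_2,v_3$. An external crossing $v$ is unaffected: only the endpoints of $v_1,v_2,v_3$ move during the move, and only past one another inside the empty triangle, so $p(v)$ and each $lk(v,v_i)$ stay the same and $\sigma_p(v)$ does not change. For $v=v_1$, say, the only terms of $\sigma_p(v_1)$ that can move are those indexed by $v_2$ and $v_3$; the jumps $\Delta\,lk(v_1,v_2)$ and $\Delta\,lk(v_1,v_3)$ are signs $\pm1$ dictated by the arrow directions at the swapped endpoints, i.e. by the incidence indices $\epsilon_\Delta(v_j)$, and invariance under $R3$ of the Gaussian-type index $v_1\mapsto\sum_{v'}lk(v_1,v')$ forces $\Delta\,lk(v_1,v_2)=-\Delta\,lk(v_1,v_3)$. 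Feeding these jumps, together with the parity relation (P3+) $\sum_j\epsilon_\Delta(v_j)p(v_j)=0$ — which, reduced modulo $\langle p(v_1)\rangle$, determines $p(v_2)$ and $p(v_3)$ up to the relevant signs — into the two changed terms, one checks that the bracket classes $[lk(v_1,v_j)p(v_j)]$ reorganize so that $\sigma_p(v_1)$ is unchanged in $\Z[A/\langle p(v_1)\rangle]$. Matching the $\pm1$ jumps of the linking numbers against the incidence indices appearing in (P3+), inside the quotient group, is the one delicate point of the argument; everything else is the cancellation bookkeeping above. Once (I0) and (I2+) are in place, $\sigma_p$ is by definition a signed index on $\mathfrak{FK}$ with coefficients in $\tilde A$, which is the assertion of the proposition.
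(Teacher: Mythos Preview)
Your proof follows essentially the same strategy as the paper's: a direct verification of (I0) for each Reidemeister move and of (I2+), carried out on Gauss diagrams using the skew behaviour of $lk$ and of $p$. Your treatment of $R1$, $R2$, and (I2+) is correct and matches the paper's (you additionally handle $R1$, which the paper leaves implicit, and you use the sharper fact $lk(v_1,v_2)=0$ for an $R2$ pair where the paper only invokes $lk(v_1,v_2)=-lk(v_2,v_1)$; both are fine).

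The one place where your argument is thinner than the paper's is $R3$. You correctly observe that only the $v_2,v_3$ terms of $\sigma_p(v_1)$ can change, that $\Delta lk(v_1,v_2)+\Delta lk(v_1,v_3)=0$, and that (P3+) gives a congruence $\epsilon_2 p(v_2)+\epsilon_3 p(v_3)\equiv 0$ modulo $p(v_1)$. But these facts alone do \emph{not} force the cancellation: the change
\[
(\ell_2+\delta)[(\ell_2+\delta)p(v_2)]-\ell_2[\ell_2 p(v_2)]+(\ell_3-\delta)[(\ell_3-\delta)p(v_3)]-\ell_3[\ell_3 p(v_3)]
\]
vanishes only for the specific combinations of $(\ell_2,\ell_3,\delta)$ and $(\epsilon_2,\epsilon_3)$ that actually occur in an $R3$ configuration, and you do not pin that relation down. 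The paper closes this gap by writing out one concrete $R3$ configuration (with $lk(v,w)=1$, $lk(u',v')=-1$, $lk(u',w')=1$ and $p(u)-p(v)-p(w)=0$), computing the three differences $\sigma_p(f_*(\cdot))-\sigma_p(\cdot)$ explicitly and seeing them vanish in the appropriate quotients, then deferring the remaining orientations to ``analogously''. Your sketch would become a proof once you supply such an explicit check (or spell out precisely how the sign of each jump $\Delta lk(v_1,v_j)$ is tied to the incidence index $\epsilon_\Delta(v_j)$), which is exactly what you flag as ``the one delicate point''.
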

\begin{proof}
  Let us check the property (I0). Let $D\in\mathfrak {FK}$, $v\in\V(D)$ and $f\colon D\to D$ be an increasing second Reidemeister move with new crossings $w_1,w_2\in\V(D')$. Then $p(w_1)=-p(w_2)$ and $lk(f_*(v),w_1)=-lk(f_*(v),w_2)$. Then
\begin{multline*}
\sigma_p(f_*(v))=\sum_{w'\in\V(D')}lk(f_*(v),w')\cdot [lk(f_*(v),w')\cdot p(w')]=\\
\sum_{w\in\V(D)}lk(f_*(v),f_*(w))\cdot [lk(f_*(v),f_*(w))\cdot p(f_*(w))]+\\
lk(f_*(v),w_1)\cdot [lk(f_*(v),w_1)\cdot p(w_1)]+lk(f_*(v),w_2)\cdot [lk(f_*(v),w_2)\cdot p(w_2)]=\\
\sum_{w\in\V(D)}lk(v,w)\cdot [lk(v,w)\cdot p(w)]=\sigma_p(v).
\end{multline*}

Let $f\colon D\to D'$ be a third Reidemeister move shown in Fig.~\ref{pic:secondary_index_r3}. Then $lk(v,w)=1$, $lk(u',v')=-1$ and $lk(u',w')=1$. By the property (P3+) $p(u)-p(v)-p(w)=0$.

\begin {figure}[h]
\centering
\includegraphics[width=0.3\textwidth]{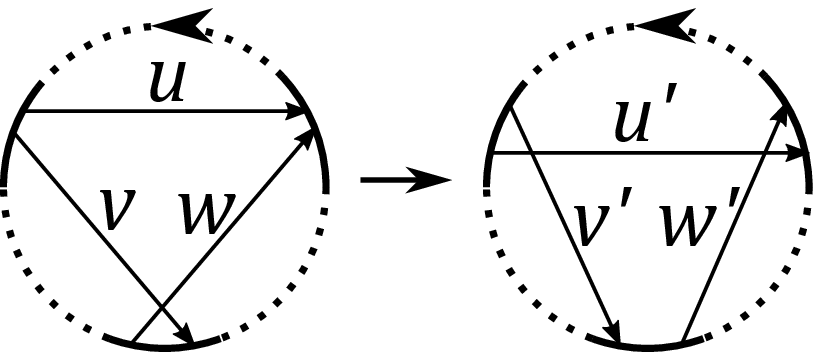}
\caption{The number $lk(v,v')$}\label{pic:secondary_index_r3}
\end {figure}
Then
\begin{gather*}
\sigma_p(f_*(u))-\sigma_p(u)=-[-p(v')]+[p(w')]=0,\\
\sigma_p(f_*(v))-\sigma_p(v)=[p(u')]-[p(w)]=0,\\
\sigma_p(f_*(u))-\sigma_p(u)=-[-p(u')]+[-p(v)]=0,
\end{gather*}
because $-p(u')=-p(u)\equiv p(v) \mod p(w)$ etc. The other cases of third Reidemeister moves are proved analogously.

Let us check the property (I2+). Let $v_1,v_2\in\V(D)$, $D\in\mathfrak {FK}$, be crossings to which a second Reidemeister move can be applied. Then $p(v_1)= -p(v_2)\equiv 0\mod p(v_i)$, $i=1,2$; $lk(v_1,v_2)=-lk(v_2,v_1)$, and  $lk(v_1,v)=-lk(v_2,v)$ for any $v\in\V(D)\setminus\{v_1,v_2\}$. Thus, $\sigma_p(v_2)=\sigma_p(v_1)^\ast$.
\end{proof}

The transcendental index polynomials in~\cite{Chengtr, CGXtr,CGX1} rely on the secondary index $\iota_p=\hat\sigma_p$ for some oriented parities $p$.

\subsection{Derived parities}

A series of signed indices (in fact, oriented parities) on virtual knot diagrams was constructed in~\cite{Nif}.

\begin{examplap}[Derived parities]\label{ex:derived_parities}
Let $\mathfrak K=\mathfrak K(K)$ be the diagram category of a virtual knot $K$, and $p$ be an oriented parity on $\mathfrak K$ with coefficients in an abelian group $A$. Then the \emph{derived parity} $p'$ on the diagram category $\mathfrak K$ is given by the formula
\[
p'(v)=\sum_{v'\in\V(D)}p(v')\cdot(D^l_v\cdot D^-_{v'})\in A', \quad v\in\V(D), D\in\mathfrak K,
\]
where $A'=A/\langle\sum_{v\in\V(D)}p(v)\cdot Ind(v)\rangle$. By definition $p'$ is a signed index with coefficients in $A'$.

One can define also the higher derivatives $p''$, $p'''$ and so on.
\end{examplap}

\subsection{Indices via parity projection}

Recall the definition of a weak parity~\cite{M3}.

\begin{definitionap}\label{def:weak_parity}
Let $\mathfrak K$ be a diagram category. A \emph{weak parity} $\psi$ on the diagram category $\mathfrak K$ is an index with coefficients in $\Z_2$ which satisfies the condition
\begin{itemize}
  \item[$(\Psi3)$]  if $f\colon D\to D'$, $D,D'\in\mathfrak K$, is a third Reidemeister move on crossings $u,v,w\in\V(D)$ and $\psi(u)=\psi(v)=0$ then $\psi(w)=0$.
\end{itemize}

Any weak parity $\psi$ induces a natural transformation $\Psi$ (called the \emph{parity projection} induced by $\psi$) from the diagram category $\mathfrak K$ to the diagram category of virtual tangles (category of flat/free tangles if $\mathfrak K$ consists of diagrams of flat/free tangles) as follows: given a diagram $D\in\mathfrak K$, the diagram $\Psi(D)$ is obtained by replacing all classical crossings $v\in\V(D)$ such that $\psi(v)=1$ by virtual crossing. The parity projection $\Psi$ defines a correct map from the knots, links or tangles of $\mathfrak K$ (i.e. the equivalence classes of diagrams to $\mathfrak K$) to virtual (or flat or free) tangles.
\end{definitionap}

\begin{propositionap}[\cite{Nwp}]
Let $p$ be an ordered parity on $\mathfrak K$ with coefficients in an abelian group $A$. Then the map $\psi_p$ given by the formula
\[
\psi_p(v)=\left\{\begin{array}{cc}
                   1, & p(v)\ne 0, \\
                   0, & p(v)=0,
                 \end{array}\right.
\]
$v\in\V(D)$, $D\in\mathfrak K$, is weak parity on $\mathfrak K$.
\end{propositionap}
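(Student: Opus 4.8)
The plan is to verify directly that $\psi_p$ meets the two requirements of Definition~\ref{def:weak_parity}: that it is an index with coefficients in $\Z_2$, and that it satisfies the extra condition $(\Psi3)$. Throughout I will exploit the fact that an oriented parity $p$ is, by Definition~\ref{def:oriented_parity}, a signed index with coefficients in the abelian group $A$ equipped with the involution $x^\ast=-x$; hence $p$ obeys $(I0)$ and $(I2+)$ in addition to the axioms $(P0)$ and $(P3+)$. The key elementary observation, used repeatedly, is that for an element $a\in A$ one has $a=0$ if and only if $-a=0$ and if and only if $\epsilon\, a=0$ for a unit $\epsilon\in\{-1,+1\}$, so passing from $p$ to the indicator $\psi_p$ respects all the sign ambiguities that appear.

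First I would check $(I0)$ for $\psi_p$. For an elementary morphism $f\colon D\to D'$ and a crossing $v\in dom(f_\ast)$, property $(I0)$ for the signed index $p$ gives $p(v)=p(f_\ast(v))$, so $p(v)=0$ exactly when $p(f_\ast(v))=0$, i.e.\ $\psi_p(v)=\psi_p(f_\ast(v))$. Next, for $(I2)$, let $v_1,v_2\in\V(D)$ be crossings to which a decreasing second Reidemeister move can be applied. Then $(I2+)$ yields $p(v_1)=p(v_2)^\ast=-p(v_2)$, so $p(v_1)=0$ exactly when $p(v_2)=0$, whence $\psi_p(v_1)=\psi_p(v_2)$. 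Since $\psi_p$ takes values in $\Z_2=\{0,1\}$ by construction, this shows $\psi_p$ is an index with coefficients in $\Z_2$.

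It remains to establish $(\Psi3)$. Suppose $f\colon D\to D'$ is a third Reidemeister move on crossings $u,v,w\in\V(D)$ with $\psi_p(u)=\psi_p(v)=0$, that is, $p_D(u)=p_D(v)=0$. Property $(P3+)$ gives
\[
\epsilon_\Delta(u)\,p_D(u)+\epsilon_\Delta(v)\,p_D(v)+\epsilon_\Delta(w)\,p_D(w)=0
\]
in $A$; the first two summands vanish, so $\epsilon_\Delta(w)\,p_D(w)=0$, and since $\epsilon_\Delta(w)\in\{-1,+1\}$ this forces $p_D(w)=0$, i.e.\ $\psi_p(w)=0$. This completes the verification. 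The argument is a routine unwinding of the definitions, so no serious obstacle arises; the only points deserving a moment's care are (a) that the relevant involution on $A$ is negation, so $(I2+)$ still forces $p(v_1)$ and $p(v_2)$ to vanish together, and (b) that the incidence indices $\epsilon_\Delta$ are units in $\Z$, which lets one cancel them in the $(P3+)$ relation even when $A$ has torsion. Note that $(P0)$ is not needed for this proposition.
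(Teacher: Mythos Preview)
Your proof is correct. The paper does not actually supply a proof of this proposition; it is stated with a citation to~\cite{Nwp} and no argument is given in the text, so there is nothing to compare your approach against beyond noting that your direct verification of $(I0)$, $(I2)$, and $(\Psi3)$ from the oriented-parity axioms is exactly the natural route and goes through without issue.
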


\begin{examplap}[Index induced by a parity projection]
Let $\mathfrak D$ be a diagram category, $\psi$ be a weak parity on $\mathfrak K$, $\Psi$ be the correspondent parity projection to the diagram category $\mathfrak {VT}$ of virtual tangles, and $\iota$ be an index on $\mathfrak {VT}$ with coefficients in a set $I$. The the \emph{index induced from $\iota$ by the parity projection $\Psi$} is the index $\psi(\iota)$ on $\mathfrak K$ with coefficients in $I\sqcup\{\bullet\}$ defined by the formula
\[
\psi(\iota)(v)=\left\{\begin{array}{cc}
                   \bullet, & \psi(v)\ne 0, \\
                   \iota(\Psi_*(v)), & p(v)=0,
                 \end{array}\right.
\]
$v\in\V(D)$, $D\in\mathfrak K$.
\end{examplap}

The index polynomials in~\cite{IK17,IKL18,IKL19,IKL20,J} use the indices induced from the Gaussian index $Ind$ by the weak parity $\psi_{Ind_n}$ where $Ind_n=Ind \mod n$, $n\in\N\cup\{0\}$ is the Gaussian index considered modulo $n$.

\subsection{Biquandle index}

Recall that a \emph{biquandle}~\cite{EN} is a set $B$ with two operations $\ast,\circ\colon B\times B\to B$ which satisfy the following conditions:
\begin{enumerate}
\item $x\circ x=x\ast x$ for all $x\in B$;
\item the maps $(x,y)\mapsto (y,x\circ y)$, $(x,y)\mapsto (x,y\ast x)$ and $(x,y)\mapsto (x\circ y, y\ast x)$ are bijections of $B\times B$
\item for any $x,y,z\in B$;
\begin{gather*}
(x\circ y)\circ (z\circ y)=(x\circ z)\circ (y\ast z),\\
(x\circ y)\ast (z\circ y)=(x\ast z)\circ (y\ast z),\\
(x\ast y)\ast (z\ast y)=(x\ast z)\ast (y\circ z).
\end{gather*}
\end{enumerate}

For a diagram $D$ of a virtual tangle, a \emph{colouring} of $D$ with a biquandle $B$ is a map $c$ from the set of the arcs in $D$ to $B$ which obeys the colouring rule in Fig.~\ref{pic:biquandle_operations} for any crossing of $D$.

\begin{figure}[h]
\centering\includegraphics[width=0.4\textwidth]{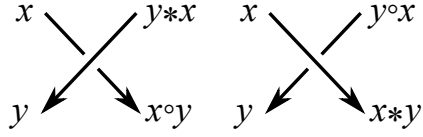}
\caption{The colouring rule}\label{pic:biquandle_operations}
\end{figure}

Let $Col_B(D)$ denote the set of colourings of the diagram $D$. For any diagram $D'$ of the same tangle, there is a bijection between the colouring sets $Col_B(D)$ and $Col_B(D')$.

Let us describe the (signed) index introduced in~\cite{Cheng2}.

\begin{examplap}[Biquandle index]
Let $B$ be a biquandle. Consider the set $\tilde B=\tilde B^-\sqcup\tilde B^+$ where $B^\pm$ are the quotient sets
\begin{gather*}
  \tilde B^+=B/\langle (x,y)=(x\circ z,y\circ z)=(x\ast z,y\ast z)=(x\circ z,y\ast z),\ x,y,z\in B\rangle, \\
  \tilde B^-=B/\langle (x,y)=(x\circ z,y\circ z)=(x\ast z,y\ast z)=(x\ast z,y\circ z),\ x,y,z\in B\rangle,
\end{gather*}
with the involution $\ast\colon \tilde B^\pm\to\tilde B^\mp$, $(x,y)^\ast=(y,x)$, $x,y\in B$.

Let $\mathfrak K$ be a diagram category of virtual tangles. Let $D\in\mathfrak K$ and $Col_B(D)$ be the set of colourings of $D$. For any colouring $c\in Col_B(D)$ and any crossing $v\in\V(D)$ consider the element $\sigma_{B,c}(v)=(x,y)\in\tilde B^{sgn(v)}$ where $x,y$ are the colours of arcs incident to $v$ (see Fig.~\ref{pic:biquandle_operations}). The \emph{biquandle signed index} $\sigma_B(v)$ of the crossing $v$ is the multiset $\sigma_B(v)=\{\sigma_{D,c}(v)\}_{c\in Col_B(D)}$.
\end{examplap}

From the biquandle properties, the following statement holds.

\begin{propositionap}
The map $\sigma_B$ is a signed index on $\mathfrak K$ with coefficients in the multi-subsets of $\tilde B$.
\end{propositionap}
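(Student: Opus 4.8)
The plan is to verify directly the two axioms of a signed index from Definition~\ref{def:index}, where the involution on the set of multi-subsets of $\tilde B$ is taken termwise: if $v\in\V(D)$ has sign $sgn(v)$, then every term of the multiset $\sigma_B(v)$ lies in $\tilde B^{sgn(v)}$ (since $sgn(v)$ does not depend on the colouring), and we set $\sigma_B(v)^\ast=\{\sigma_{D,c}(v)^\ast\}_{c\in Col_B(D)}$, a multi-subset of $\tilde B^{-sgn(v)}$.

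First I would recall the standard fact underlying the invariance of the biquandle colouring number: every elementary morphism $f\colon D\to D'$ of $\mathfrak K$ — an isotopy, a diagram isomorphism, a detour or boundary move, or a Reidemeister move — induces a bijection $f^{\#}\colon Col_B(D)\to Col_B(D')$, and these bijections are compatible with composition. Hence it suffices to prove: (a) for every such $f$ and every $v\in dom(f_*)$ one has $\sigma_{D,c}(v)=\sigma_{D',f^{\#}(c)}(f_*(v))$ in $\tilde B^{sgn(v)}$ for all $c\in Col_B(D)$, which gives (I0) after forming the multisets; and (b) if $v_1,v_2\in\V(D)$ are the two crossings of a decreasing second Reidemeister move (which are removed, hence compared directly with a common colouring $c$), then $\sigma_{D,c}(v_1)=\sigma_{D,c}(v_2)^\ast$ for all $c\in Col_B(D)$, which gives (I2+).

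Both (a) and (b) reduce to a local check in the disk where the move happens. For an isotopy, isomorphism, detour or boundary move, and for a crossing not participating in a first or second Reidemeister move, the pair of colours incident to $v$ is literally unchanged, so there is nothing to prove. For a third Reidemeister move and a crossing $v_i$ among the three involved, $f$ preserves the sign of $v_i$, and the colours of the two arcs incident to $v_i$ are simultaneously transformed by biquandle operations with the colours of the third strand (this is precisely the computation that proves invariance of colourings under the third Reidemeister move, using the third biquandle axiom); by construction of $\tilde B^{+}$ (resp.\ $\tilde B^{-}$) the relations $(x,y)=(x\circ z,y\circ z)=(x\ast z,y\ast z)=(x\circ z,y\ast z)$ (resp.\ with the last relation replaced by $(x\ast z,y\circ z)$) hold for all $z\in B$, so the class of the incident pair in $\tilde B^{sgn(v_i)}$ is unchanged, giving (a). For the decreasing second Reidemeister move, $v_1$ and $v_2$ carry opposite signs; using the colouring rule at the two crossings and the fact that the colours re-emerging past $v_2$ coincide with those entering $v_1$, one reads off that the ordered incident pair at $v_2$ is the pair at $v_1$ with its two entries interchanged and transformed by biquandle operations — that is, exactly $\sigma_{D,c}(v_1)^\ast$ once one passes to the quotient $\tilde B^{-sgn(v_1)}$ and applies the defining relations — giving (b).

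The step I expect to be the main obstacle is the bookkeeping in the last two cases: one must pin down, from the convention of Fig.~\ref{pic:biquandle_operations}, which of the two arcs at a crossing is the ``$\circ$-output'' and which is the ``$\ast$-output'', correlate this with the sign of the crossing (so that the asymmetric fourth defining relation of $\tilde B^{\pm}$ is applied on the correct side), and then check that for each of the finitely many local configurations of a third Reidemeister move and of a decreasing second Reidemeister move the required identity follows from the biquandle axioms and the quotient relations. Once this dictionary is fixed each configuration is a short computation, and assembling the local checks with the compositional compatibility of the maps $f^{\#}$ shows that $\sigma_B$ satisfies (I0) and (I2+), i.e.\ is a signed index with coefficients in the multi-subsets of $\tilde B$.
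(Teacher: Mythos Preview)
Your proposal is correct and is precisely the argument the paper has in mind: the paper gives no explicit proof, merely stating that the proposition follows ``from the biquandle properties,'' and your verification of (I0) via the bijections $f^{\#}\colon Col_B(D)\to Col_B(D')$ together with the quotient relations defining $\tilde B^{\pm}$, and of (I2+) via the involution $(x,y)^\ast=(y,x)$, is exactly what that entails. The bookkeeping you flag as the main obstacle is indeed the only nontrivial content, and your description of how it resolves is accurate.
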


\begin{remarkap}\label{rem:biquandle_Gaussian_index}
The map $\sigma_{B,c}$ can be considered as a \emph{local signed index}. It satisfies the property (I2+) and the following property: for any morphism $f\colon D\to D'$ and any crossing $v\in dom(f_*)$ we have $\sigma_{B,f_*(c)}(f_*(v))=\sigma_{B,c}(v)$ where $f_*(c)\in Col_B(D')$ is the colouring which corresponds to the colouring $c$. Note that $\sigma_{B,c}$ may not define a (global) signed index due to the colouring monodromy.
\end{remarkap}

\begin{examplap}
Let $B=\Z$ be the biquandle with operations $x\circ y=x\ast y=x+1$. Then $\tilde B^\pm\simeq \Z$ by the isomorphism $(x,y)\mapsto x-y$. Then the index $\sigma_{B,c}$ coincides with the Gaussian signed index $n$ on the virtual knots.
\end{examplap}

\subsection{An example}

Consider the knot 3.1 from Green's table~\cite{Green}. Let us calculate some of the indices described above.

\begin{figure}[h]
\centering\includegraphics[width=0.11\textwidth]{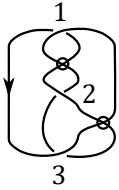}
\caption{The knot 3.1}\label{pic:knot31}
\end{figure}

\begin{enumerate}
  \item The gluing signed index $\sigma_{gl}$ values of the crossings are shown in Fig.~\ref{pic:31gluing_index}
\begin{figure}[h]
\centering\includegraphics[width=0.4\textwidth]{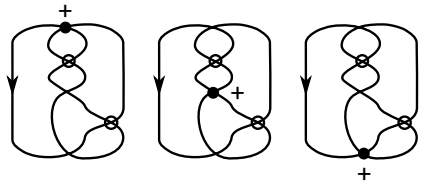}
\caption{The gluing index $\sigma_{gl}$ on the knot 3.1}\label{pic:31gluing_index}
\end{figure}
  \item The values of the smoothing signed indices $\sigma_{sm}^{or}$ and $\sigma_{sm}^{un}$ on the crossings are shown in Fig.~\ref{pic:31smoothing_index}
\begin{figure}[h]
\centering\includegraphics[width=0.4\textwidth]{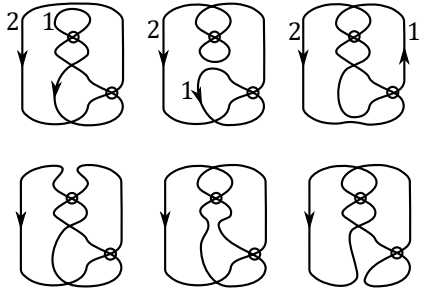}
\caption{The smoothing signed indices $\sigma_{sm}^{or}$ (top) and $\sigma_{sm}^{un}$ (bottom) on the knot 3.1}\label{pic:31smoothing_index}
\end{figure}
  \item The values of the based matrix sign index $\sigma_{bm}$ on the crossings are

\[
\left(\begin{array}{r|rrr}
        0 & \mathbf{-1} & -1 & 2 \\
        \hline
        \mathbf 1 & \mathbf 0 & \mathbf 0 & \mathbf 2 \\
        1 & \mathbf 0 & 0 & 1 \\
        -2 & \mathbf{-2} & -2 & 0
      \end{array}\right)^-
      \
      \left(\begin{array}{r|rrr}
        0 & -1 & \mathbf{-1} & 2 \\
        \hline
        1 & 0 & \mathbf 0 & 2 \\
        \mathbf 1 & \mathbf 0 & \mathbf 0 & \mathbf 1 \\
        -2 & -2 & \mathbf{-2} & 0
      \end{array}\right)^+
      \
      \left(\begin{array}{r|rrr}
        0 & -1 & -1 & \mathbf 2 \\
        \hline
        1 & 0 & 0 & \mathbf 2 \\
        1 & 0 & 0 & \mathbf 1 \\
        \mathbf{-2} & \mathbf{-2} & \mathbf{-2} & \mathbf 0
      \end{array}\right)^-
\]
The matrix is the matrix of the form $b$; the first column and row correspond to the element $s$; the column and the row which correspond to $d$ are highlighted with the bold font; the sign $\epsilon$ is the matrix superscript.

\item The Gaussian index is $n(1)=-1$, $n(2)=-1$, $n(3)=2$.
\item The secondary index of the Gaussian index is trivial for the first and the second crossings, and takes the value $\sigma_n(3)=2\cdot[1]\in\Z[\Z_2]$ for the third crossing.
\item The derived parity $n'$ of the Gaussian index has coefficients in $\Z_4$ and is equal to $n'(1)=-1$, $n'(2)=1$, $n'(3)=-1$. The second derivative $n''$ has coefficients in $\Z_4$ and is equal to  $n''(1)=-1$, $n''(2)=0$, $n''(3)=1$.
\item Consider the weak parity $\psi=\psi_{Ind_2}$. Then the induced index $\iota=\psi(Ind)$ is equal $\iota(1)=\iota(2)=\bullet$, $\iota(3)=0$.
\item Let $B=\Z$ be the biquandle with operations $x\circ y=x\ast y=z+1$. Then by Remark~\ref{rem:biquandle_Gaussian_index} the biquandle signed index $\sigma_B$ is isomorphic to the Gaussian index $n$ and takes the values $-1,-1,2$ on the crossings of the knot 3.1.
\end{enumerate}

\end{document}